\documentclass{amsart}
\usepackage[margin=3.5cm]{geometry}
\usepackage[dvipsnames]{xcolor}

\usepackage{amsfonts, amsthm, amssymb,verbatim,amscd,amsmath, blindtext}

\usepackage{multirow,multicol}
\usepackage{graphicx}
\graphicspath{ {./images/} }
\usepackage{enumitem,yhmath}
\usepackage{amssymb}
\usepackage{ytableau}
\usepackage{tikz}
\usepackage{tikz-cd,rotating}
\usepackage{float, pifont}
\usepackage[utf8]{inputenc}
\usepackage{mathtools,nccmath}
\usepackage{float}
\usepackage{adjustbox}
\usepackage[colorlinks=true,citecolor=blue,linkcolor=BrickRed]{hyperref}
\usepackage{caption} 
\newtheorem{theorem}{Theorem}[section]
\newtheorem{lemma}[theorem]{Lemma}

\theoremstyle{remark}
\newtheorem{remark}[theorem]{Remark}
\newtheorem{question}[theorem]{Question}

\theoremstyle{definition}
\newtheorem{definition}[theorem]{Definition}

\DeclareMathOperator{\Min}{Min}

\title{Associated primes and v-numbers of integral closures of powers of edge ideals}
\author[Chau]{Trung Chau}
\address{Department of Mathematics, University of Manitoba, 420 Machray Hall, 186 Dysart
Road, Winnipeg, MB R3T 2N2 Canada}
\email{chauchitrung1996@gmail.com}

\author{T\`ai Huy H\`a}
\address{Tulane University, Mathematics Department, 6823 St. Charles Avenue, New Orleans, LA 70118, USA}
\email{tha@tulane.edu}

\author{Thanh Vu}
\address{Institute of Mathematics, VAST, 18 Hoang Quoc Viet, Hanoi, Vietnam}
\email{vuqthanh@gmail.com}

\begin{document}

\begin{abstract}
Let $I = I(G)$ be the edge ideal of a finite simple graph $G$. We investigate the relationship between the associated primes and $v$-numbers of powers $I^t$ and their integral closures $\overline{I^t}$. For the third power, we establish the associated prime containment $\operatorname{Ass}(S/\overline{I^3}) \subseteq \operatorname{Ass}(S/I^3)$, which yields $v(I^3) \le v(\overline{I^3})$. In contrast, for $t \ge 4$, we show that $\operatorname{Ass}(S/\overline{I^t})$ and $\operatorname{Ass}(S/I^t)$ are in general incomparable. Furthermore, for every fixed $t \ge 3$ and $r\ge 4$, we construct families of graphs for which the discrepancies $v(\overline{I^t}) - v(I^t)$ and $v(I^r)-v(\overline{I^r})$ are arbitrarily large. 
\end{abstract}

\maketitle

\section{Introduction}
Let $S = \mathrm{k}[x_1, \ldots, x_n]$ be a standard graded polynomial ring over a field $\mathrm{k}$, and let $I \subseteq S$ be a nonzero homogeneous ideal. For an associated prime $P \in \operatorname{Ass}(S/I)$, the \emph{local $v$-number} of $I$ at $P$ is defined by
\[
v_P(I) = \min \{ d \ge 0 \mid \text{there exists } f \in S_d \text{ such that } (I : f) = P \}.
\]
The \emph{$v$-number} of $I$, introduced by Cooper et al.~\cite{CSTVV} in connection with the minimum distance function of projective Reed--Muller-type codes, is given by
\[
v(I) = \min_{P \in \operatorname{Ass}(S/I)} v_P(I).
\]
The $v$-number of edge ideals was studied by Jaramillo and
Villarreal~\cite{JV}. Conca~\cite{Conca} and Ficarra and
Sgroi~\cite{FS} established the eventual linearity of the
$v$-numbers of ordinary powers of homogeneous ideals. More
recently, Ficarra and Sgroi~\cite{FSFil} studied the
$v$-function of Noetherian graded filtrations and proved its
eventual quasi-linearity, including for integral closure
filtrations.

Vanmathi A and Sarkar~\cite{VS} investigated the $v$-numbers
of integral closures of powers of monomial ideals. Biswas,
Mandal, and Phukan~\cite{BMP} showed that the $v$-numbers
of a monomial ideal and its integral closure need not be
comparable. They also proved that if $G$ is a connected graph,
then $v(I(G)^t)=v(\overline{I(G)^t})$ for
$t\ge 1+|E(G)|$, and obtained an eventual comparison for
disconnected graphs. In this paper, we investigate the
relationship between $v(I(G)^t)$ and $v(\overline{I(G)^t})$
at individual powers. Since $I(G)^t=\overline{I(G)^t}$ for
$t=1,2$, the third power is the first instance where
the two ideals, and consequently their associated primes and
$v$-numbers, can diverge.

On the associated prime side, Trung~\cite{Trung09} studied
the stability of the associated primes of integral closures
of powers of monomial ideals. For edge ideals, Mart\'inez-Bernal,
Morey, and Villarreal~\cite{MMV} showed that the eventual
stable sets of associated primes of ordinary powers and their
integral closures coincide, while Mau and Trung~\cite{MT}
obtained bounds for the stability indices of associated primes
of integral closures. In contrast to the combinatorial
description of associated primes of ordinary powers given by
Lam and Trung~\cite{LT}, a general description of
$\operatorname{Ass}(S/\overline{I(G)^t})$ for arbitrary graphs
and powers is not available.

Because the $v$-number is defined in terms of associated
primes, we first investigate their relationship for ordinary
powers and their integral closures. We also compare the first
powers at which the homogeneous maximal ideal becomes an
associated prime. Our main contributions are organized as
follows:

\begin{enumerate}
    \item \textbf{Local equality at minimal primes.}
    For every minimal prime $P \in \operatorname{Min}(S/I(G))$ and every $t \ge 1$, we prove that the local $v$-numbers coincide:
    \[
    v_P(I(G)^t) = v_P(\overline{I(G)^t}).
    \]

    \item \textbf{Containment and inequality for the third power.}
    For $t = 3$, we prove that 
    \[
    \operatorname{Ass}(S/\overline{I(G)^3}) \subseteq \operatorname{Ass}(S/I(G)^3) \text{ and }
    v(I(G)^3) \le v(\overline{I(G)^3}).
    \]

    \item \textbf{Incomparability for higher powers.}
    For every fixed integer $t \ge 4$, we show that $\operatorname{Ass}(S/\overline{I(G)^t})$ and $\operatorname{Ass}(S/I(G)^t)$ are, in general, incomparable under set inclusion. Moreover, the first power at which the homogeneous maximal ideal becomes associated to an integral closure can be arbitrarily smaller than the corresponding first power for ordinary powers.

    \item \textbf{Arbitrarily large gaps for higher powers.}
    For every fixed integer $t \ge 3$ and any integer $r \ge 1$, we construct families of graphs for which the discrepancy
    \[
    v(\overline{I(G)^t}) - v(I(G)^t) \ge r - 1.
    \]
    Furthermore, for $t \ge 4$, we show that the reverse difference $v(I(G)^t) - v(\overline{I(G)^t})$ can also be made arbitrarily large.
\end{enumerate}

\medskip

\noindent\textbf{Acknowledgment.} Chau gratefully acknowledges that this research was supported in part by the Pacific Institute for the Mathematical Sciences and a grant from the Simons Foundation International [SFI-MPS-T-Institutes-00020822, OY]. H\`a is partially supported by a Simons Foundation~grant. u is partially supported by the National Foundation for Science and Technology Development under grant number Nafosted/101/10104/2026/1279.

\noindent\textbf{AI Disclosure.} Theorem~\ref{thm:ordinary-larger-arbitrarily} was proved by ChatGPT-6.1 Sol in the case $t=4$ and generalized by the authors. 

\section{Preliminaries}

We collect the graph-theoretic notation and the criterion for membership in the integral closure that will be used throughout the paper.

\begin{definition}
    
Let $G$ be a simple graph with vertex set $V(G) = \{1, \ldots, n\}$ and edge set $E(G)$.
\begin{enumerate}
    \item A simple graph $H$ is a \emph{subgraph} of $G$ if $V(H) \subseteq V(G)$ and $E(H) \subseteq E(G)$. It is an \emph{induced subgraph} of $G$ if $E(H) = \{\{u,v\} \in E(G) \mid u, v \in V(H)\}$.
    
    \item For a subset $U \subseteq V(G)$, we denote by $G[U]$ and $G - U$ the induced subgraphs of $G$ on $U$ and on $V(G) \setminus U$, respectively.
    
    \item For a vertex $v \in V(G)$, $N_G(v)$ denotes the set of neighbors of $v$. For a subset $U \subseteq V(G)$, $N_G(U) = \bigcup_{u \in U} N_G(u)$ denotes the neighborhood of $U$, and $N_G[U] = N_G(U) \cup U$ denotes the closed neighborhood of $U$.

   \item A subset $X \subseteq V(G)$ is called an \emph{independent set} if no two vertices in $X$ are adjacent in $G$. We denote by $\alpha(G)$ the maximum size of an independent set of $G$.
   
    \item A subset $C \subseteq V(G)$ is called a \emph{vertex cover} (or simply a \emph{cover}) of $G$ if it meets every edge of $G$; that is, $C \cap e \neq \emptyset$ for all $e \in E(G)$.

    \item A graph $G$ is called \emph{bipartite} if its vertex set can be partitioned as $V(G) = U \cup V$ with $U \cap V = \emptyset$ such that every edge in $E(G)$ has one endpoint in $U$ and the other in $V$.
\item A graph $G$ is called \emph{strongly non-bipartite} if each of its connected components is non-bipartite.

\item An edge $e$ of $G$ is called a bridge if its removal increases the number of connected components by one.

    \item A \emph{matching} in $G$ is a set of pairwise non-adjacent edges. The \emph{matching number} of $G$, denoted by $\operatorname{mat}(G)$, is the size of a maximum matching of $G$. A \emph{perfect matching} is a matching that covers every vertex of $G$.
    
    \item A graph $G$ is called \emph{factor-critical} if $G - v$ has a perfect matching for every $v \in V(G)$.
    
\end{enumerate}
\end{definition}
Throughout this paper, we denote by $S = \mathrm{k}[x_1, \ldots, x_n]$ the polynomial ring over a field $\mathrm{k}$, and by $\mathfrak{m} = (x_1, \ldots, x_n)$ its maximal homogeneous ideal. For a nonzero monomial $f \in S$, its \emph{support}, denoted by $\operatorname{supp}(f)$, is the set of all variables in $S$ that divide $f$. For each $i \in [n]$, we denote by $\deg_i(f)$ the exponent of $x_i$ in $f$, and by $\deg(f)$ the total degree of $f$. For any subset $X \subseteq [n]$, we define
\[
\deg_X(f) = \sum_{i \in X} \deg_i(f).
\]

\begin{definition}
    
Let $G$ be a simple graph on $V(G) = [n]$ and let $\mathbf{a} = (a_1, \ldots, a_n) \in \mathbb{N}^n$ be a vector of positive integers. The \emph{parallelization} of $G$ with respect to $\mathbf{a}$, denoted by $G_{\mathbf{a}}$, is the graph obtained by replicating each vertex $i \in V(G)$ into $a_i$ copies, denoted $i_1, \ldots, i_{a_i}$, such that $\{i_k, j_\ell\} \in E(G_{\mathbf{a}})$ if and only if $\{i,j\} \in E(G)$.
\end{definition}

For the subsequent arguments, we use the fractional matching number with capacity and its dual formulation.
\begin{definition}
Let $G$ be a simple graph with $V(G) = \{v_1, \ldots, v_n\}$, and let $\mathbf{a} \in \mathbb{N}^n$ be an exponent vector. The \emph{fractional matching number of $G$ with capacity $\mathbf{a}$} is defined as
\[
\nu_G^*(\mathbf{a}) = \max \left\{ \sum_{e \in E(G)} z_e \;\middle|\; z_e \ge 0 \text{ for all } e \in E(G), \text{ and } \sum_{e \ni v} z_e \le a_v \text{ for all } v \in V(G) \right\}.
\]
\end{definition}

We have the following membership criterion in terms of $\nu_G^*(\mathbf{a})$, see \cite[Proposition 1.1]{HT}.

\begin{lemma}\label{lem_criterion_int}
Let $G$ be a simple graph and let $\mathbf{a} \in \mathbb{N}^n$. Then $x^{\mathbf{a}} \in \overline{I(G)^t}$ if and only if $\nu_G^*(\mathbf{a}) \ge t$.
\end{lemma}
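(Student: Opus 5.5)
The plan is to reduce the statement to the standard description of the integral closure of a monomial ideal by its Newton polyhedron, and then to observe that the resulting membership condition is exactly the linear program defining $\nu_G^*(\mathbf{a})$.

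First I recall the classical fact for a monomial ideal $J\subseteq S$: $\overline{J}$ is again a monomial ideal, and $x^{\mathbf{a}}\in\overline{J}$ if and only if $\mathbf{a}$ lies in the Newton polyhedron $\operatorname{NP}(J)=\operatorname{conv}\{\mathbf{b} : x^{\mathbf{b}}\in J\}+\mathbb{R}_{\ge 0}^n$. I would take this as known; it follows from $x^{k\mathbf{a}}\in J^k$ for some $k$ together with Carathéodory-type rational convex combinations. I apply it to $J=I(G)^t$. The exponent vectors of the minimal generators of $I(G)^t$ are sums of $t$ vectors of the form $\mathbf{e}_i+\mathbf{e}_j$ with $\{i,j\}\in E(G)$. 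Hence the convex hull of these generators is $t\cdot\operatorname{conv}\{\mathbf{e}_i+\mathbf{e}_j : \{i,j\}\in E(G)\}$, and therefore $\operatorname{NP}(I(G)^t)=t\cdot\operatorname{NP}(I(G))$. The inclusion $\subseteq$ is clear. For $\supseteq$, a point $t\sum_e\lambda_e(\mathbf{e}_i+\mathbf{e}_j)$ with $\sum_e\lambda_e=1$ can be written as the average of $t$ copies of itself, which is a convex combination of $t$-fold sums of edge vectors.

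Second, I unwind the condition. The vector $\mathbf{a}\in t\cdot\operatorname{NP}(I(G))$ if and only if there are $\lambda_e\ge 0$ with $\sum_e\lambda_e=1$ and $\sum_{e\ni v}t\lambda_e\le a_v$ for every vertex $v$. Setting $z_e=t\lambda_e$, this says precisely that there is a feasible $z$ for the program defining $\nu_G^*(\mathbf{a})$ with $\sum_e z_e=t$. So if $x^{\mathbf{a}}\in\overline{I(G)^t}$, then $\nu_G^*(\mathbf{a})\ge t$.

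For the converse, suppose $z$ is an optimal fractional matching with value $s=\nu_G^*(\mathbf{a})\ge t$; this requires $E(G)\neq\emptyset$, otherwise both sides fail. I scale to $z'=(t/s)z$. Then $z'$ still satisfies the vertex capacity constraints, since $t/s\le 1$, and it has total value exactly $t$. Thus $\mathbf{a}\in t\cdot\operatorname{NP}(I(G))$, and so $x^{\mathbf{a}}\in\overline{I(G)^t}$.

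The only step with real content is the Newton polyhedron description of $\overline{J}$ together with the identity $\operatorname{NP}(J^t)=t\operatorname{NP}(J)$. I expect the main care point to be rationality: an optimal $z$ can be chosen rational because the LP has integer data, so the convex combination has rational coefficients. This lets me clear denominators and exhibit an explicit integral dependence relation $(x^{\mathbf{a}})^k\in I(G)^{tk}$ if one wants a self-contained argument.
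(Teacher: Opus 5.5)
Your argument is correct. The paper does not prove this lemma; it quotes \cite[Proposition 1.1]{HT}, and your route is the standard argument for that result: the Newton-polyhedron description $x^{\mathbf{a}}\in\overline{I^t}\iff\mathbf{a}\in t\operatorname{NP}(I)$, rewritten as the capacity LP, then finished by rescaling an optimal fractional matching. The one unclear step is your justification of $\operatorname{NP}(I(G)^t)\supseteq t\operatorname{NP}(I(G))$; a cleaner version is $t\sum_e\lambda_e(\mathbf{e}_i+\mathbf{e}_j)=\sum_e\lambda_e\,t(\mathbf{e}_i+\mathbf{e}_j)$, where each $t(\mathbf{e}_i+\mathbf{e}_j)$ is the exponent of $(x_ix_j)^t\in I(G)^t$.
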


Let $Q$ be the covering polyhedron of $G$, namely 
\[
Q = \left\{ \mathbf{y} \in \mathbb{R}^{|V(G)|}_{\ge 0} \;\middle|\; y_u + y_v \ge 1 \text{ for every edge } \{u,v\} \in E(G) \right\}.
\]
By linear programming duality, $\nu_G^*(\mathbf{a})$ can also be expressed as the optimal value of the dual problem:
\[
\nu_G^*(\mathbf{a}) = \min \left\{ \sum_{v \in V(G)} a_v y_v \;\middle|\; \mathbf{y} \in Q \right\}.
\]

\section{Associated primes of ordinary powers and their integral closures}

In this section, we investigate associated primes of powers of an edge ideal and their integral closures. Our first result establishes a containment of associated primes for the third power.

\begin{theorem}\label{thm:third_power}
Let $I=I(G)$ be the edge ideal of a simple graph $G$. If $f$ is a monomial such that $(\overline{I^3}:f)=P$ is a prime ideal, then $(I^3:f)=P$. In particular,
\[
\operatorname{Ass}(S/\overline{I^3})\subseteq\operatorname{Ass}(S/I^3)
\quad\text{and}\quad
v(I^3)\le v(\overline{I^3}).
\]
\end{theorem}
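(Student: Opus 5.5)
The plan has three steps: reduce to monomials, characterize $\overline{I^3}$ modulo $I^3$ via Lemma~\ref{lem_criterion_int}, and then exploit that $(\overline{I^3}:f)$ is a monomial prime, hence generated by variables.

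\textbf{Step 1: reduction.} Since $I^3\subseteq\overline{I^3}$, we always have $(I^3:f)\subseteq(\overline{I^3}:f)=P$. Since $P\neq S$, we know $f\notin\overline{I^3}$. Both colons are monomial ideals, and $P$ is a monomial prime, so $P=(x_i \mid i\in A)$ for some $A\subseteq[n]$. It therefore suffices to show $x_if\in I^3$ for every $i\in A$. The consequences about $\operatorname{Ass}$ and $v$-numbers then follow directly. Every associated prime of a monomial ideal is realized as $(J:f)$ for a monomial $f$. Moreover, $v_P$ of a monomial ideal is attained by a monomial; this is a standard fact, and if needed I would reprove it by passing to a suitable term of a witness polynomial. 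Hence $v_P(I^3)\le v_P(\overline{I^3})$ for each $P\in\operatorname{Ass}(S/\overline{I^3})$, and taking minima gives $v(I^3)\le v(\overline{I^3})$.

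\textbf{Step 2: structure of $\overline{I^3}$ modulo $I^3$.} I would first prove a lemma: if $x^{\mathbf a}\in\overline{I^3}\setminus I^3$, then $x^{\mathbf a}$ is divisible by $x_{T_1}x_{T_2}$, where $T_1,T_2$ are vertex-disjoint triangles of $G$ with no edge of $G$ between them. Moreover, the support of $x^{\mathbf a}$ admits no integral matching of size $3$ compatible with the capacities $\mathbf a$. The intended argument is as follows. Take an optimal fractional matching $z$ with $\sum z_e\ge 3$ and capacity $\mathbf a$, and choose it half-integral, which is possible since the fractional matching polytope has half-integral vertices. The edges with $z_e=1/2$ decompose into odd cycles. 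An integral matching of size $2$ plus one odd cycle of length $\ge 5$ already yields $3$ integral edges. The same holds for a single odd cycle of length $\ge 7$, or for an odd cycle together with an extra unit of capacity at a cycle vertex. The only way to reach $\nu^*\ge 3$ with maximum integral matching $\le 2$ is therefore two triangles carrying total weight exactly $3$, which are disjoint and have no usable edges or capacity outside them.

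\textbf{Step 3: contradiction.} Suppose $i\in A$ but $x_if\notin I^3$. By Step~2, $x_if$ is divisible by $x_{T_1}x_{T_2}$ as above. Since $f\notin\overline{I^3}$, the vertex $i$ must lie in one of the triangles with $\deg_i(f)=0$, say $T_1=\{i,j,k\}$. Then $f=x_jx_kx_{T_2}h$, and the rigidity in Step~2 severely restricts $h$: no edge in $\operatorname{supp}(h)\cup T_1\cup T_2$ can create a third disjoint edge. Now I would use primeness of $P$. The variables $x_j$, $x_k$, and the neighbours of $T_1$ that are allowed by $h$ are tested one at a time. If $x_j\in P$, then $x_jf\in\overline{I^3}$, and applying the Step~2 analysis to $x_jf$ gives an integral $3$-matching unless the configuration degenerates. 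If $x_j\notin P$, I would look for a product of two variables outside $P$ that lies in $(\overline{I^3}:f)$, contradicting primeness. A natural candidate is $x_j x_\ell$ for some vertex $\ell$ adjacent to $T_2$, or $x_j^2$ combined with the edge $jk$.

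\textbf{Main obstacle.} I expect this last case analysis to be the hard part: pinning down exactly which monomial outside $P$ enters the colon in each configuration. Proving the structure lemma of Step~2 cleanly is the second delicate point.
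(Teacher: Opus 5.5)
\textbf{There is a genuine gap in Step 3.} Your Steps 1 and 2 are sound. Step 2 is exactly the description $\overline{I^3}=I^3+(m_Cm_D)$, over exceptional pairs of triangles, that the paper imports from \cite[Theorem 2.10]{MV}. Your half-integrality count does prove it: total fractional weight minus the integral weight you can extract equals half the number of fractional odd cycles. This forces exactly two triangles carrying weight $1/2$ and nothing else, and any edge between them would give three integral edges.

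Step 3, however, is the actual content of the theorem, and you leave it as an unresolved case analysis. Your proposed dichotomy ($x_j\in P$ versus $x_j\notin P$, then hunting for a product of two variables outside $P$ lying in the colon) points in the wrong direction. No exotic product is needed: $P\supseteq I^3$ is prime, so $x_jx_k\in I\subseteq P$ automatically. What you must prove is that both $x_j\notin P$ and $x_k\notin P$, i.e.\ that $x_jf\notin\overline{I^3}$ and $x_kf\notin\overline{I^3}$. Your phrase ``unless the configuration degenerates'' hides exactly this.

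Concretely, take $T_1=\{i,j,k\}$ and $T_2=\{a,b,c\}$, with $\deg_i(f)=0$ and $x_jx_kx_ax_bx_c\mid f$. The missing argument has three parts.
\begin{enumerate}
\item[(i)] $\deg_j(f)=\deg_k(f)=1$. Otherwise $(x_ix_j)(x_jx_k)(x_ax_b)$ or $(x_ix_k)(x_jx_k)(x_ax_b)$ divides $x_if$, contradicting $x_if\notin I^3$.
\item[(ii)] $x_jf\notin I^3$. A product of three edges dividing $x_jf$ must use $x_j$ twice, since $f\notin I^3$ and $\deg_j(f)=1$. It uses $jk$ at most once, never $ij$ (since $x_i$ is absent), and never an edge into $T_2$ (there are none). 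So it contains an edge $ju$ with $u\notin T_1\cup T_2$ and $x_u\mid f$. But then $(x_jx_u)(x_ix_k)(x_ax_b)\mid x_if$, a contradiction.
\item[(iii)] $x_jf\notin\overline{I^3}\setminus I^3$. By your Step 2, membership would give a squarefree $x_{T_1'}x_{T_2'}$ dividing $x_jf$. Since $\operatorname{supp}(x_jf)=\operatorname{supp}(f)$, this squarefree monomial would divide $f$, forcing $f\in\overline{I^3}$.
\end{enumerate}
The same holds with $k$ in place of $j$. Hence $x_j,x_k\notin P$ while $x_jx_k\in P$, contradicting primality.

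The exchange argument in (ii) and the squarefree/support observation in (iii) are the ideas your proposal lacks. Without them, Step 3 is a plan rather than a proof.
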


\begin{proof}
By \cite[Theorem 2.10]{MV},
\[
\overline{I^3}=I^3+(m_Cm_D\mid C,D\text{ are exceptional pairs of triangles of }G),
\]
where $m_C=\prod_{v\in C}x_v$, and $C,D$ form an exceptional pair if $N_G[C]\cap D=\emptyset$. Set $L=\overline{I^3}$, and suppose that $(L:f)=P$ is a monomial prime ideal. Since $I^3\subseteq L$, we have $(I^3:f)\subseteq P$.

Assume for contradiction that there exists a variable $x\in P$ such that $xf\notin I^3$. Then $xf\in L\setminus I^3$, so $xf$ is divisible by $m_Cm_D$ for some exceptional pair of triangles $C,D$. Write $C=\{x,y,z\}$ and $D=\{a,b,c\}$. Since $f\notin L$, the square-free monomial $m_Cm_D$ cannot divide $f$. Thus $x\nmid f$ and $yzabc\mid f$. Furthermore, $y^2\nmid f$ and $z^2\nmid f$: otherwise $xy,yz,ab$ or $xz,yz,ab$, respectively, would be three edges whose product divides $xf$.

We claim that $yf\notin I^3$. Otherwise, since $f\notin I^3$ and $y$ occurs to exponent one in $f$, any product of three edges dividing $yf$ must use both copies of $y$. At most one of these edges can be $yz$, since $z$ also occurs to exponent one in $f$. As $x\nmid f$ and no vertex of $D$ is adjacent to $y$, at least one of the two edges incident to $y$ must be $yu$ for some $u\notin C\cup D$. But then the product of the three edges $yu,xz,ab$ divides $xf$, a contradiction. By symmetry, $zf\notin I^3$.

The monomials $yf$ and $zf$ have the same support as $f$. Every generator $m_Cm_D$ in the above description of $L$ is square-free. Therefore, if an exceptional generator divided $yf$ or $zf$, it would also divide $f$, which is impossible. We conclude that $yf,zf\notin L$, and hence $y,z\notin P$. But $P$ contains $I$, so it is a vertex cover prime of $G$. This contradicts $yz\in I$.

Thus $P\subseteq(I^3:f)$, and equality follows. Finally, choose $f$ with $(L:f)=P$ and $\deg(f)=v_P(L)$. We obtain $v_P(I^3)\le v_P(L)$ for every $P\in\operatorname{Ass}(S/L)$, which gives the asserted containment and inequality.
\end{proof}

We next give a criterion for the homogeneous maximal ideal to be an associated prime of the integral closure of a power. We first show that every monomial witness for the maximal ideal has a fixed degree.

\begin{lemma}\label{lem_deg_maximal_colon}
Let $G$ be a simple graph and $I=I(G)$. Suppose that $f=x^{\mathbf{a}}$ is a monomial such that $(\overline{I^t}:f)=\mathfrak{m}$. Then $\deg(f)=2t-1$.
\end{lemma}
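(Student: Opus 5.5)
The plan is to translate everything into the fractional matching language of Lemma~\ref{lem_criterion_int} and its LP dual over the covering polyhedron $Q$. The hypothesis $(\overline{I^t}:f)=\mathfrak{m}$ means two things. First, $f\notin\overline{I^t}$, i.e.\ $\nu_G^*(\mathbf{a})<t$. Second, $x_vf\in\overline{I^t}$ for every vertex $v$, i.e.\ $\nu_G^*(\mathbf{a}+\mathbf{e}_v)\ge t$ for all $v\in V(G)$. Note $f\neq 1$, since otherwise $\overline{I^t}=\mathfrak{m}$, which is impossible for $t\ge1$ as $\overline{I^t}$ lives in degrees $\ge 2t$. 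So $\deg(f)\ge 1$.

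For the lower bound, I use the trivial estimate $\nu_G^*(\mathbf{b})\le \deg(x^{\mathbf{b}})/2$. Each edge weight $z_e$ consumes capacity $2z_e$, so summing the vertex constraints gives $2\sum_e z_e\le\sum_v b_v$; equivalently, $\mathbf{y}=(\tfrac12,\dots,\tfrac12)\in Q$ in the dual. Applying this to $\mathbf{b}=\mathbf{a}+\mathbf{e}_v$ for any $v$ gives $t\le(\deg f+1)/2$, hence $\deg(f)\ge 2t-1$.

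For the upper bound, I want to show that $\nu_G^*(\mathbf{a})=\deg(f)/2$, since then $\nu_G^*(\mathbf{a})<t$ forces $\deg(f)\le 2t-1$. Pick an optimal dual solution $\mathbf{y}\in Q$ for $\mathbf{a}$ that is a vertex of $Q$. The key input is the classical fact that the vertices of the fractional vertex cover polyhedron $Q$ are half-integral, with coordinates in $\{0,\tfrac12,1\}$. Consequently $\nu_G^*(\mathbf{a})\in\tfrac12\mathbb{Z}$, so $\nu_G^*(\mathbf{a})\le t-\tfrac12$. Since the same $\mathbf{y}$ is feasible for the capacity $\mathbf{a}+\mathbf{e}_v$, we get
\[
t\le \nu_G^*(\mathbf{a}+\mathbf{e}_v)\le \sum_u a_uy_u+y_v=\nu_G^*(\mathbf{a})+y_v\le t-\tfrac12+y_v,
\]
so $y_v\ge\tfrac12$ for every vertex $v$. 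Then $\nu_G^*(\mathbf{a})=\sum_u a_uy_u\ge \tfrac12\deg(f)$, and combined with the trivial upper bound this gives equality. Hence $\deg(f)=2\nu_G^*(\mathbf{a})\le 2t-1$.

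The main obstacle is the half-integrality step. Without it, the inequality $y_v\ge t-\nu_G^*(\mathbf{a})$ only yields a positive lower bound, not $\tfrac12$. I would cite the standard result (Balinski; Nemhauser--Trotter) that the vertices of $Q$ are half-integral. Alternatively, I would argue directly that an optimal fractional matching can be chosen half-integral by decomposing its support into edges and odd cycles, which gives the same conclusion $\nu_G^*(\mathbf{a})\in\tfrac12\mathbb{Z}$. I also need an optimal dual solution to exist at a vertex of $Q$. This holds because $Q$ is pointed (it lies in the nonnegative orthant) and the objective $\sum_u a_uy_u$ is bounded below on it.
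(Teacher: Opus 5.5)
Your proof is correct, and it differs from the paper's mainly in how you get $\nu_G^*(\mathbf{a})=\deg(f)/2$.

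The paper also fixes an optimal extreme point $\mathbf{y}$ of $Q$, but it first extracts only $y_v>0$ for every $v$. This step needs no half-integrality: if $y_v=0$, then $\nu_G^*(\mathbf{a}+\mathbf{e}_v)\le\mathbf{a}\cdot\mathbf{y}<t$. The paper then argues structurally. Every component of the subgraph of tight edges must contain an odd cycle, since otherwise a $\pm\varepsilon$ perturbation on a bipartite component (or an isolated vertex) would stay inside $Q$ and contradict extremality. The tight equalities along an odd cycle then force $\mathbf{y}=(\tfrac12,\ldots,\tfrac12)$. In effect, the paper reproves from scratch the special case of Nemhauser--Trotter half-integrality it needs: an extreme point of $Q$ with full support is the all-halves vector.

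You instead quote the general half-integrality theorem. You use it to sharpen $\nu_G^*(\mathbf{a})<t$ to $\nu_G^*(\mathbf{a})\le t-\tfrac12$, and then get $y_v\ge\tfrac12$ by a one-line numerical comparison.

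What each route buys:
\begin{itemize}
\item Yours is shorter and purely arithmetic once the black box is granted. The paper itself invokes exactly this black box in the proof of Lemma~\ref{lem_capacity}.
\item The paper's keeps this lemma self-contained and yields the slightly sharper conclusion that the optimal extreme point is exactly $(\tfrac12,\ldots,\tfrac12)$.
\end{itemize}
Your lower bound via the all-halves dual vector is the same as the paper's final step.
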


\begin{proof}
By Lemma \ref{lem_criterion_int},
\[
\nu_G^*(\mathbf{a})<t
\quad\text{and}\quad
\nu_G^*(\mathbf{a}+\mathbf{e}_v)\ge t
\quad\text{for every }v\in V(G).
\]
Let $\mathbf{y}$ be an optimal extreme point of the covering polyhedron $Q$ for $\nu_G^*(\mathbf{a})$. We claim that $y_v>0$ for all $v\in V(G)$. Indeed, if $y_v=0$, then
\[
\nu_G^*(\mathbf{a}+\mathbf{e}_v)
\le (\mathbf{a}+\mathbf{e}_v)\cdot\mathbf{y}
=\mathbf{a}\cdot\mathbf{y}
=\nu_G^*(\mathbf{a})<t,
\]
a contradiction.

Let $H$ be the spanning subgraph of $G$ whose edges are those $\{u,v\}$ satisfying $y_u+y_v=1$. Every connected component of $H$ must contain an odd cycle. Otherwise, a bipartite component, including the possibility of an isolated vertex, would allow us to perturb $\mathbf{y}$ by small positive and negative multiples of the vector taking the values $1$ and $-1$ on its two parts and zero elsewhere. Since all coordinates of $\mathbf{y}$ are positive, the nonnegativity inequalities remain satisfied; the tight edge inequalities are unchanged, and the remaining inequalities remain satisfied for sufficiently small perturbations. This contradicts the fact that $\mathbf{y}$ is an extreme point of $Q$.

On each connected component of $H$, the tight equalities, together with an odd cycle, force $y_v=1/2$ at every vertex. Hence $\mathbf{y}=(1/2,\ldots,1/2)$, and therefore
\[
\nu_G^*(\mathbf{a})=\frac{\deg(f)}2<t.
\]
On the other hand, for any vertex $v$, the same feasible dual vector gives
\[
t\le\nu_G^*(\mathbf{a}+\mathbf{e}_v)
\le\frac{\deg(f)+1}{2}.
\]
Since $\deg(f)$ is an integer, these inequalities imply $\deg(f)=2t-1$.
\end{proof}

The following criterion strengthens the necessary condition in Lemma \ref{lem_deg_maximal_colon} and its converse. For a subset $U\subseteq V(G)$, we write $\mathbf{a}(U)=\sum_{u\in U}a_u$.

\begin{lemma}\label{lem_capacity}\label{lem:weighted_hall}
Let $G$ be a simple graph, let $I=I(G)$, and let $f=x^{\mathbf{a}}$ be a monomial. Then $(\overline{I^t}:f)=\mathfrak{m}$ if and only if $\deg(f)=2t-1$ and
\[
\mathbf{a}(N_G(U))\ge\mathbf{a}(U)+1
\]
for every nonempty independent set $U\subseteq V(G)$.
\end{lemma}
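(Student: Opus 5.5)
The plan is to translate both sides of the equivalence into statements about the dual linear program for $\nu_G^*$, using Lemma~\ref{lem_criterion_int}. By that lemma, $(\overline{I^t}:f)=\mathfrak{m}$ holds if and only if
\[
\nu_G^*(\mathbf{a})<t
\quad\text{and}\quad
\nu_G^*(\mathbf{a}+\mathbf{e}_v)\ge t \text{ for every } v\in V(G).
\]
By the dual formulation, $\nu_G^*(\mathbf{b})=\min\{\mathbf{b}\cdot\mathbf{y}\mid \mathbf{y}\in Q\}$, and the minimum of a linear function with nonnegative coefficients over $Q$ is attained at an extreme point.

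The first key ingredient is the classical fact (Balinski; Nemhauser--Trotter) that every extreme point of the covering polyhedron $Q$ is half-integral, with entries in $\{0,\tfrac12,1\}$. For such a point let $U=\{u\mid y_u=0\}$. The edge inequalities force $U$ to be independent and $y_w=1$ for every $w\in N_G(U)$. Because $\mathbf{b}\ge 0$, lowering every other coordinate to $\tfrac12$ keeps the point in $Q$ and does not increase $\mathbf{b}\cdot\mathbf{y}$. Hence, for $\mathbf{b}\ge0$, the minimum over $Q$ equals the minimum over the vectors $\mathbf{y}^U$, where $U$ is an independent set (possibly empty) and $\mathbf{y}^U$ takes the value $0$ on $U$, $1$ on $N_G(U)$, and $\tfrac12$ elsewhere. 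A direct computation gives
\[
\mathbf{b}\cdot\mathbf{y}^U=\frac{\deg(x^{\mathbf{b}})}{2}+\frac{\mathbf{b}(N_G(U))-\mathbf{b}(U)}{2}.
\]

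For the forward direction, Lemma~\ref{lem_deg_maximal_colon} already gives $\deg(f)=2t-1$. Take a nonempty independent set $U$, pick $v\in U$, and put $\mathbf{b}=\mathbf{a}+\mathbf{e}_v$, so that $\deg(x^{\mathbf{b}})=2t$. The condition $t\le\nu_G^*(\mathbf{b})\le \mathbf{b}\cdot\mathbf{y}^U$ becomes $\mathbf{b}(N_G(U))\ge \mathbf{b}(U)$. Since $v\in U$ and $v\notin N_G(U)$, this reads $\mathbf{a}(N_G(U))\ge\mathbf{a}(U)+1$.

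For the converse, assume $\deg(f)=2t-1$ together with the Hall-type inequality. Then the all-$\tfrac12$ vector gives $\nu_G^*(\mathbf{a})\le (2t-1)/2<t$, so $f\notin\overline{I^t}$. Now fix any vertex $v$ and set $\mathbf{b}=\mathbf{a}+\mathbf{e}_v$; by the reduction above it suffices to check $\mathbf{b}(N_G(U))\ge\mathbf{b}(U)$ for every independent $U$.
\begin{enumerate}
\item If $U=\emptyset$, the inequality is trivial.
\item If $v\in U$, it is exactly the hypothesis.
\item If $v\notin U$, the hypothesis gives $\mathbf{a}(N_G(U))\ge \mathbf{a}(U)+1$, and adding $\mathbf{e}_v$ can only increase the left side.
\end{enumerate}
Thus $\nu_G^*(\mathbf{a}+\mathbf{e}_v)\ge t$ for all $v$, so $\mathfrak{m}\subseteq(\overline{I^t}:f)$. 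Equality holds because $f\notin\overline{I^t}$.

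The main obstacle is the reduction to the vectors $\mathbf{y}^U$, which rests on the half-integrality of the extreme points of $Q$. I would either cite it or reprove it by the perturbation argument already used in Lemma~\ref{lem_deg_maximal_colon}: perturb the coordinates lying strictly in $(0,\tfrac12)$ and in $(\tfrac12,1)$ by $\pm\varepsilon$ in opposite directions, and note that the tight edge constraints are preserved. Isolated vertices need a little care. An isolated vertex $u$ with $a_u>0$ violates the inequality for $U=\{u\}$, and this is consistent with the fact that such a $u$ cannot lie in the colon ideal.
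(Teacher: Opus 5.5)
Your proof is correct and follows essentially the paper's route. The converse is the same computation of the dual objective at the half-integral extreme points of $Q$; your lowering of the coordinates outside $U\cup N_G(U)$ to $\tfrac12$ plays the role of the paper's observation that $N_G(Z)$ lies in the set where $y=1$. In the forward direction you evaluate the dual at the feasible vector $\mathbf{y}^U$, whereas the paper argues on the primal side that a weight-$t$ fractional matching with capacity $\mathbf{a}+\mathbf{e}_v$ saturates every vertex; these are the same weak-duality inequality viewed from the two sides.
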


\begin{proof}
Suppose that $(\overline{I^t}:f)=\mathfrak{m}$. By Lemma \ref{lem_deg_maximal_colon}, $\deg(f)=2t-1$. Let $U$ be a nonempty independent set and choose $v\in U$. Since $x_vf\in\overline{I^t}$, there exists a fractional matching with capacity $\mathbf{b}=\mathbf{a}+\mathbf{e}_v$ and total weight at least $t$. As $\sum_{u\in V(G)}b_u=2t$, its total weight must be exactly $t$, and all vertex capacities are saturated. Since no edge has both endpoints in $U$, all capacity used at vertices of $U$ must be matched with vertices of $N_G(U)$. Thus
\[
\mathbf{a}(N_G(U))=\mathbf{b}(N_G(U))
\ge\mathbf{b}(U)=\mathbf{a}(U)+1.
\]

Conversely, assume that $\deg(f)=2t-1$ and that the stated inequalities hold. In particular, $f\notin\overline{I^t}$ by degree. Fix $v\in V(G)$ and put $\mathbf{b}=\mathbf{a}+\mathbf{e}_v$. By Lemma \ref{lem_criterion_int}, it is enough to show that $\nu_G^*(\mathbf{b})\ge t$.

The assumed inequalities imply that $G$ has no isolated vertices.
Let $\mathbf{y}$ be an extreme point of $Q$. By
\cite[Lemma~4.5]{DHNT}, every coordinate of $\mathbf{y}$ belongs to
$\{0,1/2,1\}$. Set
\[
Z=\{u:y_u=0\},\quad M=\{u:y_u=1/2\},\quad U=\{u:y_u=1\}.
\]
The set $Z$ is independent and $N_G(Z)\subseteq U$. Since $\deg(f)=2t-1$, we have
\[
\mathbf{b}\cdot\mathbf{y}
=\frac{2t-1+\mathbf{a}(U)-\mathbf{a}(Z)}2+y_v.
\]
If $Z=\emptyset$, then $y_v\ge1/2$ and $\mathbf{b}\cdot\mathbf{y}\ge t$. If $Z\ne\emptyset$, the hypothesis gives
$\mathbf{a}(U)\ge\mathbf{a}(N_G(Z))\ge\mathbf{a}(Z)+1$, and again $\mathbf{b}\cdot\mathbf{y}\ge t$. Thus the dual objective is at least $t$ at every extreme point of $Q$. By linear programming duality, $\nu_G^*(\mathbf{b})\ge t$.

It follows that $x_vf\in\overline{I^t}$ for every vertex $v$. Since $f\notin\overline{I^t}$, we have $(\overline{I^t}:f)=\mathfrak{m}$, as desired.
\end{proof}

As an immediate consequence, we recover the following sufficient condition, which will also be used in the next section.

\begin{lemma}\label{lem:stricthall}
Let $G$ be a graph on an odd number $N$ of vertices and assume that
\[
|N_G(U)|>|U|
\]
for every nonempty independent set $U\subseteq V(G)$. Let $I=I(G)$, let $\mathfrak{m}$ be the homogeneous maximal ideal of $S$, and set $t=(N+1)/2$. Then
\[
\mathfrak{m}\in\operatorname{Ass}(S/\overline{I^t}).
\]
\end{lemma}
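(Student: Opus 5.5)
The plan is to apply Lemma~\ref{lem:weighted_hall} with the all-ones exponent vector. Set $f=x_1x_2\cdots x_N$, the product of all variables, so $\mathbf{a}=(1,\ldots,1)$. Then $\deg(f)=N=2t-1$ by the choice $t=(N+1)/2$, which is exactly the degree condition in Lemma~\ref{lem:weighted_hall}. With $\mathbf{a}$ constant equal to $1$, we have $\mathbf{a}(U)=|U|$ and $\mathbf{a}(N_G(U))=|N_G(U)|$ for every subset $U$. Hence the weighted Hall condition
\[
\mathbf{a}(N_G(U))\ge \mathbf{a}(U)+1
\]
becomes $|N_G(U)|\ge |U|+1$. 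Since all quantities are integers, this is equivalent to the hypothesis $|N_G(U)|>|U|$ for every nonempty independent set $U$.

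Lemma~\ref{lem:weighted_hall} therefore gives $(\overline{I^t}:f)=\mathfrak{m}$. Because $f$ is a monomial (hence homogeneous), this colon ideal exhibits $\mathfrak{m}$ as an associated prime of $S/\overline{I^t}$, which is the claim.

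There is essentially no obstacle; the only points to check are the following.
\begin{itemize}
\item The parity assumption makes $t$ an integer, so the degree condition $\deg(f)=2t-1$ is meaningful.
\item The strict inequality in the hypothesis converts to the ``$+1$'' form of Lemma~\ref{lem:weighted_hall} by integrality.
\item The hypothesis applied to singletons $U=\{v\}$ forces $G$ to have no isolated vertices. This is needed implicitly, so that $x_vf\in\overline{I^t}$ is possible for every $v$.
\end{itemize}
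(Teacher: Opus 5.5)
Your proposal is correct and follows the paper's proof: take $f$ to be the product of all variables, note $\deg(f)=N=2t-1$, and observe that with $\mathbf{a}=(1,\ldots,1)$ the inequalities of Lemma~\ref{lem_capacity} are exactly the hypothesis $|N_G(U)|\ge|U|+1$. Your side remark about isolated vertices is harmless but unnecessary, since Lemma~\ref{lem_capacity} already derives this from its inequalities.
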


\begin{proof}
Take $f=\prod_{v\in V(G)}x_v$. Then $\deg(f)=N=2t-1$, and the inequalities in Lemma \ref{lem_capacity} are precisely the assumed strict inequalities. Thus $(\overline{I^t}:f)=\mathfrak{m}$.
\end{proof}

We recall the generalized ear decompositions and invariants of Lam and Trung \cite{LT}, which will be used to compare the associated primes of ordinary powers.

\begin{definition}
A \emph{generalized ear decomposition} of a graph $G$ is a sequence $\mathcal{E}$ of walks in $G$ such that the first walk is closed, the endpoints of each subsequent walk belong to earlier walks, and every vertex of $G$ appears exactly once as an inner point of $\mathcal{E}$. A vertex is an inner point if it is neither the last vertex of the first walk nor an endpoint of any subsequent walk; we count the first vertex of the first walk as an inner point. A generalized ear decomposition is \emph{odd-beginning} if the first walk in each connected component is an odd cycle.
\end{definition}

\begin{definition}
For a strongly non-bipartite graph $G$, let $\varphi^*(G)$ be the minimum number of even walks in odd-beginning generalized ear decompositions of its connected components, and set
\[
\mu^*(G)=\frac{\varphi^*(G)+|V(G)|-c(G)}2,
\]
where $c(G)$ denotes the number of connected components of $G$.
\end{definition}

We will use the criterion \cite[Theorem 3.6]{LT}: $\mathfrak{m}\in\operatorname{Ass}(S/I(G)^t)$ if and only if there exists a dominating set $U\subseteq V(G)$ such that $G[U]$ is strongly non-bipartite and $\mu^*(G[U])<t$. We will also use \cite[Lemma 3.9]{LT}, which gives $\mu^*(H)=\mu^*(H-w)+1$ when $H$ is strongly non-bipartite and $w$ is a leaf.

\begin{lemma}\label{lem_bridge_edge}
Let $G$ be a connected graph and let $e$ be a bridge of $G$. Then every generalized ear decomposition of $G$ contains the closed walk of length two along $e$.
\end{lemma}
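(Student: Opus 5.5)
The plan is to argue directly from the definition of a generalized ear decomposition, using only the fact that removing the bridge $e=\{u,w\}$ separates $G$ into two components. Write $G-e=G_u\sqcup G_w$ with $u\in V(G_u)$ and $w\in V(G_w)$. Every walk in $G$ that passes from $V(G_u)$ to $V(G_w)$ must traverse $e$.

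The key observation concerns how the decomposition must reach both sides. Let $\mathcal{E}=(W_1,\dots,W_k)$ be a generalized ear decomposition. Every vertex of $G$ appears in some walk, and the endpoints of each later walk lie on earlier walks. So the set of vertices covered by $W_1\cup\dots\cup W_i$ grows, and it must eventually meet both $G_u$ and $G_w$. Consider the first walk $W_i$ that contains vertices of both sides; such a walk traverses $e$. I will argue that the edge $e$ is in fact traversed \emph{back and forth}, and that the portion doing so is forced to be the closed walk $u,w,u$ (or $w,u,w$) as a walk of the sequence.

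I would split into two cases.

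\emph{Case 1: $i=1$.} The first walk is closed. A closed walk crossing a bridge must cross it an even number of times, so it goes $\dots u\,w\dots w\,u\dots$.

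\emph{Case 2: $i>1$.} Both endpoints of $W_i$ lie in earlier walks, which lie entirely on one side, say in $G_u$. Hence $W_i$ starts and ends in $G_u$, and it also crosses $e$ an even number of times.

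In either case the excursion into $G_w$ enters via $u\to w$ and leaves via $w\to u$.

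The main obstacle is the step from this to the statement that the decomposition literally \emph{contains} the closed walk of length two along $e$, rather than merely a walk that uses $e$ twice. This step needs the inner-point condition. Every vertex appears exactly once as an inner point, and all vertices of $G_w$ must still be covered. I would use this to show that the walks in the decomposition are minimal, in the sense that they cannot wander into $G_w$ beyond $w$ without creating repeated inner points. So the excursion into $G_w$ visits only $w$, and $w$ is an inner point there. Moreover, since walks in an ear decomposition are the ``ears'', the excursion must be a separate walk $u,w,u$ whose endpoints are $u$, an earlier vertex, and whose only inner point is $w$.

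If the paper's notion allows longer walks, I expect the intended reading to be the following. The rest of $G_w$ is attached by later walks with endpoints at $w$. Any walk crossing $e$ must return through $u$, so its restriction gives a walk on the $G_w$ side that begins and ends at $w$. In this way the unique way to cross $e$ is the length-two closed walk $u\,w\,u$. To make this rigorous, I would argue by counting inner points: a walk $u\,w\,P\,w\,u$ with $P$ nontrivial would have $w$ as an inner point twice, which contradicts the requirement that every vertex appears exactly once as an inner point.
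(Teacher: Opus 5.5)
Your route is the paper's: take the first walk $W$ that uses $e=\{u,w\}$. All earlier walks lie in the component of $G-e$ containing $u$, by the one-line induction that each earlier walk avoids $e$ and shares an endpoint with a still earlier one. So both endpoints of $W$ are on the $u$-side and $W$ crosses $e$ an even number of times. The inner-point condition then forbids $w$ from occurring twice, so $W$ has a single excursion, the consecutive segment $u,w,u$.

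The gap is the step you yourself call the main obstacle. Knowing that $u,w,u$ is a \emph{segment} of $W$ does not give $W=u,w,u$, and neither of your justifications closes this. The appeal to walks being ``the ears'' is not an argument. Ruling out $u,w,P,w,u$ with $P$ nontrivial only controls what happens between the two crossings; nothing you wrote excludes a walk such as $x,\dots,u,w,u,\dots,y$ with extra vertices before the first crossing or after the second. Likewise, your Case~1 stops at ``crosses $e$ an even number of times'' and never reaches $W_1=u,w,u$.

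The missing ingredient is that the inner points of a later walk are \emph{new}, not merely distinct. By induction, every vertex on $W_1,\dots,W_{i-1}$ already occurs as an inner point of one of those walks; for the closed first walk, every vertex except the repeated last one is inner. Since each vertex is an inner point exactly once, the inner points of $W_i$ avoid all earlier vertices. The argument then runs as follows.
\begin{enumerate}
\item The vertex $u$ occurs twice in $W$, and at most one occurrence can be inner, so one occurrence is an endpoint.
\item Hence $u$ lies on an earlier walk, so \emph{neither} occurrence is inner.
\item So both occurrences of $u$ are the endpoints of $W$. Since they flank $w$ consecutively, $W=u,w,u$.
\end{enumerate}
For $i=1$ it is quicker to note that a closed walk whose vertices, apart from the repeated last one, are pairwise distinct is either a cycle or a closed walk of length two, and a bridge lies on no cycle. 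With this step supplied, your proof coincides with the paper's, which relies precisely on the inner vertices of $W$ being ``new and distinct''.
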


\begin{proof}
Let $A$ and $B$ be the components of $G-e$, and suppose that the first walk of a generalized ear decomposition lies in $A$. This is automatic unless the first walk is the closed walk of length two along $e$, in which case there is nothing to prove. Let $W$ be the first walk using $e$. Before $W$, all vertices that have appeared lie in $A$, so both endpoints of $W$ lie in $A$. The walk $W$ must cross $e$ into $B$ and return along $e$. Since its inner vertices are new and distinct, it can contain no additional vertices between the two crossings, and it must be the closed walk of length two along $e$. The same argument applies if the first walk lies in $B$.
\end{proof}

We can now construct examples for which an integral closure has the maximal ideal as an associated prime while the corresponding ordinary power does not.

\begin{theorem}\label{thm:noncontainment}
For every integer $t\ge4$, there exists a simple graph $G$ such that
\[
\mathfrak{m}\in\operatorname{Ass}(S/\overline{I(G)^t})
\quad \text{but} \quad
\mathfrak{m}\notin\operatorname{Ass}(S/I(G)^t).
\]
\end{theorem}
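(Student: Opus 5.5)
The plan is to give an explicit graph $G$ with an odd number $N=2t-1$ of vertices and then check two things. First, $G$ should satisfy the strict Hall condition $|N_G(U)|>|U|$ for every nonempty independent set $U$. Lemma~\ref{lem:stricthall} then gives $\mathfrak{m}\in\operatorname{Ass}(S/\overline{I(G)^t})$ at once. Second, $G$ should admit no dominating set $U$ such that $G[U]$ is strongly non-bipartite and $\mu^*(G[U])<t$. By \cite[Theorem 3.6]{LT} this gives $\mathfrak{m}\notin\operatorname{Ass}(S/I(G)^t)$.

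The mechanism I want is that every admissible $U$ must carry many bridges. By Lemma~\ref{lem_bridge_edge}, each bridge of a component of $G[U]$ forces a closed walk of length two, which is an even walk, in every generalized ear decomposition. So $\varphi^*(G[U])$ is at least the number of bridges, and $\mu^*(G[U])$ is large.

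The first candidate is two triangles joined by a path with $k=2t-7\ge1$ internal vertices. For $U=V(G)$ there are $k+1$ bridges, which gives $\mu^*\ge(k+1+N-1)/2\ge t$. The strict Hall condition holds there because $G$ has no leaves and the degree-$2$ path vertices have two neighbours.

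The danger, already visible for $t=4$, is a proper dominating subset $U$. Such a $U$ can drop path vertices, leaving components consisting of a triangle with pendant tails. For these, \cite[Lemma 3.9]{LT} gives $\mu^*(G[U])=|U|-c(G[U])\cdot 2$, and this can be smaller than $t$. So the construction must be modified so that domination forces $U=V(G)$, or at least forces the bridged structure into $G[U]$.

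To force this, I would replace each path vertex by a small gadget that can only be dominated from inside, for example by attaching to it a pair of twin degree-two vertices. The parallelization notion $G_{\mathbf a}$ is a natural way to produce such twins while preserving strict Hall: replicating a vertex keeps neighbourhoods large relative to independent sets. I would also keep the core odd cycles triangles, so that every component of $G[U]$ must contain a triangle in order to be non-bipartite. The parameters are then tuned so that $|V(G)|=2t-1$ exactly.

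The verification then splits into two steps.
\begin{enumerate}
\item Strict Hall. I would check it by classifying independent sets according to how they meet the triangles and the gadgets. Each gadget contributes strictly more neighbours than vertices, and the triangles contribute at least as many.
\item The ordinary-power side. I would show that any dominating $U$ with $G[U]$ strongly non-bipartite must contain all the gadget centres. Then either $G[U]$ is connected with at least $N-2t+\ldots$ bridges, or it splits into components each of which has few vertices relative to its bridges. In each case I would bound $\mu^*(G[U])\ge t$ by combining Lemma~\ref{lem_bridge_edge}, which counts even walks, with the leaf-removal formula \cite[Lemma 3.9]{LT}.
\end{enumerate}

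I expect step 2 to be the main obstacle. It is exactly where the naive construction fails, and the right gadget has to be found by trying small cases. I would start from $t=4$ and $N=7$, search for the correct gadget there, and then extend by lengthening the bridged path two vertices at a time, so that $t$ increases by one at each step.
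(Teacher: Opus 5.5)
Your proposal has a genuine gap. It never specifies the graph, and the framework it commits to provably cannot handle $t=4$. That framework is the all-ones witness via Lemma~\ref{lem:stricthall}, so $|V(G)|=2t-1$ with the strict Hall condition on all of $G$.

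The one gadget you sketch already conflicts with this framework. Twins produced by parallelization are non-adjacent with the same neighbourhood, so two degree-two twins form an independent set $U$ with $|N_G(U)|=2=|U|$.

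More decisively, every graph $G$ on $7$ vertices with $|N_G(U)|>|U|$ for all nonempty independent $U$ has $\mathfrak m\in\operatorname{Ass}(S/I(G)^4)$. To see this, note first that such a $G$ has minimum degree at least $2$ and no bipartite component. Next, every strongly non-bipartite $H$ satisfies $\mu^*(H)\le|V(H)|-2c(H)$. Indeed, each component on $n_i$ vertices can be built from an odd cycle by closed walks of length two, so $\varphi^*(H)\le\sum_i(n_i-3)$. Now consider the cases.
\begin{itemize}
\item If $G$ is disconnected, it has exactly two components, and $U=V(G)$ works.
\item If $G$ has a $5$-cycle, its vertex set is dominating.
\item If the only odd cycles of $G$ have length $7$, then $G=C_7$, since any chord creates a triangle or a $5$-cycle, and $\mu^*(G)=3$.
\item Otherwise $G$ has a triangle $T$. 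By connectivity at most $3$ vertices have no neighbour in $T$, and a short check on their number gives $Y$ with $|Y|\le 2$ such that $T\cup Y$ is connected and dominating.
\end{itemize}
Each case yields an admissible $U$ with $\mu^*(G[U])\le 3<4$, so \cite[Theorem 3.6]{LT} gives $\mathfrak m\in\operatorname{Ass}(S/I(G)^4)$. Your plan therefore fails at its starting point $t=4$, $N=7$, whatever gadget is chosen.

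Incidentally, your first candidate is $G_{2t-6}$. By Lemmas~\ref{lem:GLhall} and~\ref{lem:sGL}, its worst admissible $U$ has $\mu^*=2t-7$, so it does work for $t\ge 7$. The obstruction you noticed is real exactly for $t=4,5,6$, and those cases are required.

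The missing idea is to abandon the all-ones witness and use Lemma~\ref{lem_capacity}, which allows zero exponents. Leaves then become available, and a leaf is exactly a gadget that forces domination from inside: if its neighbour is not in $U$, the leaf lies in $U$ and is isolated in $G[U]$. The paper's construction works as follows.
\begin{itemize}
\item The core $B$ is $C_3$, $C_{2t-5}$, and a vertex $z$ joined to one vertex of each, so $|V(B)|=2t-1$. A leaf is attached to every vertex of $B$.
\item The witness is $f=\prod_{v\in V(B)}x_v$.
\item For every vertex $w$ other than the leaf at $z$, $x_wf$ is divisible by a product of $t$ edges. For the leaf $w$ at $z$, one uses the fractional matching with weight $1$ on $wz$ and $1/2$ on every cycle edge. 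This is precisely where $\overline{I^t}$ and $I^t$ separate.
\item On the ordinary side, the leaves force $V(B)\subseteq U$ for every admissible $U$. Then \cite[Lemma 3.9]{LT}, the two bridges at $z$, and Lemma~\ref{lem_bridge_edge} give $\mu^*(G[U])\ge\mu^*(B)=t$.
\end{itemize}
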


\begin{proof}
Fix $t\ge4$ and set $k=t-3$. Let $C_3$ and $C_{2k+1}$ be disjoint odd cycles, and let $z$ be a vertex joined to one vertex of each cycle. Denote by $B$ the graph on $C_3\cup C_{2k+1}\cup\{z\}$. Let $G$ be obtained from $B$ by attaching one leaf to every vertex of $B$. Thus $|V(B)|=2t-1$. Consider the monomial
\[
f=\prod_{v\in V(B)}x_v.
\]
Since $\deg(f)=2t-1$, we have $f\notin\overline{I(G)^t}$. We claim that $x_wf\in\overline{I(G)^t}$ for every vertex $w$ of $G$.

If $w$ belongs to one of the odd cycles, we may use the two edges of that cycle incident to $w$, together with a perfect matching of the remaining vertices of that cycle. The other odd cycle, together with $z$, has a perfect matching. These edges give $t$ edge factors dividing $x_wf$. If $w$ is the leaf attached to a vertex $v$ of either cycle, then $B-v$ has a perfect matching of size $t-1$. Together with the edge $wv$, this gives $t$ edge factors dividing $x_wf$.

If $w=z$, we use the two edges joining $z$ to the two cycles and perfect matchings of the even paths obtained by deleting their neighbors from the cycles. Again, the product of these $t$ edges divides $x_zf$. Finally, if $w$ is the leaf attached to $z$, assign fractional matching weight $1$ to the edge $wz$ and weight $1/2$ to every edge of the two odd cycles. This is a fractional matching with capacity given by the exponent vector of $x_wf$ and total weight
\[
1+\frac32+\frac{2k+1}{2}=k+3=t.
\]
By Lemma \ref{lem_criterion_int}, $x_wf\in\overline{I(G)^t}$ in this last case as well. Hence $(\overline{I(G)^t}:f)=\mathfrak{m}$.

It remains to prove that $\mathfrak{m}\notin\operatorname{Ass}(S/I(G)^t)$. Let $U$ be a dominating set of $G$ such that $G[U]$ is strongly non-bipartite. We claim that $V(B)\subseteq U$. Indeed, if $v\in V(B)\setminus U$, its attached leaf must belong to $U$ in order to be dominated. That leaf would then be an isolated vertex of $G[U]$, a contradiction.

Thus $G[U]$ is obtained from $B$ by adding some of its leaves. By \cite[Lemma 3.9]{LT}, $\mu^*(G[U])\ge\mu^*(B)$. The two edges joining $z$ to the cycles are bridges. By Lemma \ref{lem_bridge_edge}, every generalized ear decomposition of $B$ has at least two even walks. Conversely, begin with one odd cycle, add the two bridge edges as closed walks of length two, and then add the other odd cycle. This gives an odd-beginning generalized ear decomposition with exactly two even walks. Therefore $\varphi^*(B)=2$, and
\[
\mu^*(B)=\frac{2+(2t-1)-1}{2}=t.
\]
Consequently, $\mu^*(G[U])\ge t$ for every such dominating set $U$. By \cite[Theorem 3.6]{LT}, $\mathfrak{m}\notin\operatorname{Ass}(S/I(G)^t)$.
\end{proof}

We now compare the first powers for which the maximal ideal becomes an associated prime. Set
\[
\iota_{\rm ord}(G)=\min\{t\ge1\mid\mathfrak{m}\in\operatorname{Ass}(S/I(G)^t)\}
\quad\text{and}\quad
\iota_{\rm int}(G)=\min\{t\ge1\mid\mathfrak{m}\in\operatorname{Ass}(S/\overline{I(G)^t})\}.
\]
For an integer $L\ge1$, let $G_L$ be the graph obtained from the path
\[
v_0-v_1-\cdots-v_L
\]
by completing $v_0$ and $v_L$ to two triangles. Thus we add vertices $a_0,b_0,a_L,b_L$ and the six edges
\[
v_0a_0,\ v_0b_0,\ a_0b_0,\quad
v_La_L,\ v_Lb_L,\ a_Lb_L.
\]
The graph has $L+5$ vertices.

\begin{figure}[ht]
\centering
\begin{tikzpicture}[scale=.95, every node/.style={circle,fill=black,inner sep=1.7pt}]
\node (v0) at (0,0) {};
\node (a0) at (-.8,.9) {};
\node (b0) at (-.8,-.9) {};
\draw (v0)--(a0)--(b0)--(v0);
\node (v1) at (1.5,0) {};
\node[fill=none,inner sep=0pt] (dots) at (3.35,0) {$\cdots$};
\node (vL1) at (5.2,0) {};
\node (vL) at (6.7,0) {};
\draw (v0)--(v1);
\draw[densely dashed] (v1)--(2.75,0);
\draw[densely dashed] (3.95,0)--(vL1);
\draw (vL1)--(vL);
\node (aL) at (7.5,.9) {};
\node (bL) at (7.5,-.9) {};
\draw (vL)--(aL)--(bL)--(vL);
\node[fill=none] at (0,-1.45) {$C_3$};
\node[fill=none] at (6.7,-1.45) {$C_3$};
\node[fill=none] at (3.35,.55) {path of length $L$};
\end{tikzpicture}
\caption{The graph $G_L$: two triangles joined by a path of length $L$.}
\label{fig:GL}
\end{figure}

We first verify the strict Hall condition on $G_L$.

\begin{lemma}\label{lem:GLhall}
For every $L\ge1$ and every nonempty independent set $U\subseteq V(G_L)$,
\[
|N_{G_L}(U)|>|U|.
\]
Moreover, if $v_0\in U$, then $|N_{G_L}(U)|\ge|U|+2$.
\end{lemma}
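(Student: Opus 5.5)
The plan is to build an explicit injection $\varphi\colon U\to N_{G_L}(U)$ that sends each vertex of $U$ to one of its neighbours, and then to exhibit at least one vertex of $N_{G_L}(U)$ (two when $v_0\in U$) that is not in the image of $\varphi$.

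\textbf{The map.} Define $\varphi$ on all of $V(G_L)$ by
\[
\varphi(v_i)=v_{i+1}\ (0\le i\le L-1),\quad \varphi(v_L)=a_L,\quad \varphi(a_0)=b_0,\quad \varphi(b_0)=a_0,\quad \varphi(a_L)=b_L,\quad \varphi(b_L)=a_L .
\]
Each $\varphi(u)$ is a neighbour of $u$, so $\varphi(U)\subseteq N_{G_L}(U)$.

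\textbf{Injectivity on $U$.} Every target has exactly one preimage, with a single exception:
\begin{itemize}
\item $v_{i+1}$ comes only from $v_i$;
\item $a_0$ comes only from $b_0$, and $b_0$ only from $a_0$;
\item $b_L$ comes only from $a_L$;
\item $a_L$ comes from both $v_L$ and $b_L$.
\end{itemize}
Since $v_Lb_L$ is an edge and $U$ is independent, $v_L$ and $b_L$ are never both in $U$. Hence $\varphi|_U$ is injective and $|N_{G_L}(U)|\ge|U|$. Note also that $v_0$ is not in the image of $\varphi$ at all.

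\textbf{The extra vertex.} I split by how $U$ meets the triangle $\{v_0,a_0,b_0\}$. Independence allows at most one of these three vertices in $U$.
\begin{itemize}
\item If $a_0\in U$ or $b_0\in U$, then $v_0\in N_{G_L}(U)$. Since $v_0\notin\varphi(U)$, this gives the strict inequality.
\item If $v_0\in U$, then $a_0,b_0\notin U$ but both lie in $N_{G_L}(U)$. Their only preimages are $b_0$ and $a_0$, which are not in $U$, so neither lies in $\varphi(U)$. This gives $|N_{G_L}(U)|\ge|U|+2$, which is the ``moreover'' claim.
\item If $U$ misses $\{v_0,a_0,b_0\}$ and contains some path vertex, let $v_j$ ($j\ge1$) be the one of smallest index. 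Then $v_{j-1}\in N_{G_L}(U)$. Its only possible preimage under $\varphi$ is $v_{j-2}$ (and it has none when $j=1$), and $v_{j-2}\notin U$ by minimality of $j$. So $v_{j-1}\notin\varphi(U)$.
\item Otherwise $U\subseteq\{a_L,b_L\}$, so $U$ is a single vertex by independence. Its neighbourhood has two vertices, namely $v_L$ and the other of $a_L,b_L$, and $2>1$.
\end{itemize}

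The main point requiring care is the collision at $a_L$ and the choice of orientation of $\varphi$ along the path. Sending each path vertex toward $v_L$ leaves $v_0$ with no preimage, and that free vertex is what supplies the surplus in every case.
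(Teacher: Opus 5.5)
Your proof is correct, and it takes a different route from the paper's. The paper splits $U$ into its path part $A=U\cap V(P)$ and its outer part. It uses the bounds $|N_P(A)|\ge|A|-1$, $|A|$, or $|A|+1$, according to how many endpoints of the path $A$ contains. It then adds the $2r+e$ neighbours that the selected endpoints and outer vertices contribute outside the path. This gives a case count in $r=|A\cap\{v_0,v_L\}|$ and $e$, the number of outer vertices of $U$, together with the separate case $A=\emptyset$.

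You instead fix a single neighbour-choice map $\varphi$ on all of $V(G_L)$. Its only collision, at $a_L$, comes from the adjacent pair $v_L,b_L$. So $\varphi|_U$ injects $U$ into $N_{G_L}(U)$ for every independent $U$ at once. All that remains is to find one or two vertices of $N_{G_L}(U)$ outside the image, which you do locally: at $v_0$, at $a_0,b_0$, or just before the first path vertex of $U$.

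Your argument is more uniform and makes the Hall-type structure explicit: the injection is a matching of $U$ into $N_{G_L}(U)$. Orienting the path toward $v_L$ is exactly what leaves $v_0$ without a preimage, and the ``moreover'' claim becomes immediate. The paper's count is slightly more quantitative; for instance, it yields $|N_{G_L}(U)|\ge|U|+3$ when both $v_0,v_L\in U$, although only the stated bounds are used later.
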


\begin{proof}
Let $P=v_0-v_1-\cdots-v_L$, and put $A=U\cap V(P)$. An independent set $A=\{v_{i_1},\ldots,v_{i_q}\}$ of $P$, with $i_1<\cdots<i_q$, has at least $q-1$ neighbors in $P$, by choosing one vertex strictly between each two consecutive vertices of $A$. If $A$ avoids both endpoints, it has at least $q+1$ neighbors in $P$. If $A$ contains exactly one endpoint, it has at least $q$ neighbors in $P$.

Let $e$ be the number of vertices of $U$ among $\{a_0,b_0,a_L,b_L\}$ and let $r=|A\cap\{v_0,v_L\}|$. Since $U$ is independent, at most one of $a_0,b_0$ and at most one of $a_L,b_L$ belongs to $U$, and none of these outer vertices belongs to $U$ when the corresponding path endpoint is in $U$.

If $A\ne\emptyset$ and $r=0$, the selected outer vertices contribute $e$ distinct neighbors outside $P$, so
\[
|N_{G_L}(U)|\ge|N_P(A)|+e\ge|A|+e+1=|U|+1.
\]
If $r\ge1$, each selected path endpoint contributes two neighbors outside $P$, and each selected outer vertex contributes its partner in its triangle. Thus
\[
|N_{G_L}(U)|\ge|N_P(A)|+2r+e\ge|A|-1+2r+e\ge|U|+1.
\]
If $r=1$, the stronger bound $|N_P(A)|\ge|A|$ gives $|N_{G_L}(U)|\ge|U|+2$. If $r=2$, the preceding bound already gives $|N_{G_L}(U)|\ge|U|+3$. This proves the additional assertion when $v_0\in U$. Finally, if $A=\emptyset$, the selected outer vertices have disjoint neighborhoods of size two, giving $|N_{G_L}(U)|\ge2|U|>|U|$.
\end{proof}

\begin{lemma}\label{lem_phi_GL}
For every $L\ge3$, $\varphi^*(G_L)=L$.
\end{lemma}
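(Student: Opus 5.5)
The plan is to prove the two inequalities $\varphi^*(G_L)\ge L$ and $\varphi^*(G_L)\le L$ separately. The lower bound comes from Lemma~\ref{lem_bridge_edge}, and the upper bound from an explicit odd-beginning generalized ear decomposition. This is the same pattern as the computation of $\varphi^*(B)$ in the proof of Theorem~\ref{thm:noncontainment}. The graph $G_L$ is connected and non-bipartite, so $\varphi^*(G_L)$ is the minimum number of even walks over odd-beginning generalized ear decompositions of $G_L$ itself.

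\textbf{Lower bound.} Each path edge $v_{i-1}v_i$, for $1\le i\le L$, is a bridge of $G_L$. Removing it separates the part containing the triangle on $v_0$ from the part containing the triangle on $v_L$, since the path is the only connection between the two triangles. By Lemma~\ref{lem_bridge_edge}, every generalized ear decomposition of $G_L$ contains the closed walk $v_{i-1}v_iv_{i-1}$ of length two along this edge. Closed walks along different edges are different walks, so every generalized ear decomposition contains at least $L$ distinct walks of even length (length two). In particular this holds for every odd-beginning one, so $\varphi^*(G_L)\ge L$.

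\textbf{Upper bound.} Take the following sequence of walks:
\begin{enumerate}
\item the closed odd walk $v_0a_0b_0v_0$ (the triangle at $v_0$);
\item the closed walks $v_0v_1v_0,\ v_1v_2v_1,\ \ldots,\ v_{L-1}v_Lv_{L-1}$, in this order;
\item the walk $v_La_Lb_Lv_L$.
\end{enumerate}
Each walk after the first has both endpoints on an earlier walk: the walk $v_{i-1}v_iv_{i-1}$ starts and ends at $v_{i-1}$, and the last walk starts and ends at $v_L$. The inner points are as follows:
\begin{itemize}
\item the first walk contributes $v_0,a_0,b_0$, counting its first vertex as an inner point per the definition;
\item the walk $v_{i-1}v_iv_{i-1}$ contributes $v_i$, for $i=1,\dots,L$;
\item the last walk contributes $a_L,b_L$.
\end{itemize}
So every one of the $L+5$ vertices appears exactly once as an inner point, and this is a valid generalized ear decomposition. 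It is odd-beginning, since the first walk is a triangle. Its only even walks are the $L$ walks of length two, because the final walk has length $3$. Hence $\varphi^*(G_L)\le L$.

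\textbf{Main obstacle.} The delicate point is checking the definition carefully: that every path edge is genuinely a bridge, and that the inner-point bookkeeping in the explicit decomposition is correct. Neither step appears to use $L\ge3$; the argument works for every $L\ge1$. The hypothesis $L\ge3$ is presumably only needed for how the lemma is applied later in the paper.
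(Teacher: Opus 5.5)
Your proof is correct and is the paper's argument: the lower bound comes from applying Lemma~\ref{lem_bridge_edge} to the $L$ bridges of the connecting path, and the upper bound from the same explicit decomposition (the triangle at $v_0$, then the $L$ closed walks $v_{i-1}v_iv_{i-1}$, then the triangle at $v_L$). You are also right that the hypothesis $L\ge3$ plays no role in this lemma and only matters for how it is used later.
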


\begin{proof}
Every edge of the connecting path is a bridge, so Lemma \ref{lem_bridge_edge} shows that an odd-beginning generalized ear decomposition of $G_L$ has at least $L$ even walks. Conversely, begin with the triangle $v_0,a_0,b_0,v_0$, add the closed walks $v_{i-1},v_i,v_{i-1}$ for $i=1,\ldots,L$, and finish with the triangle $v_L,a_L,b_L,v_L$. This is an odd-beginning generalized ear decomposition with exactly $L$ even walks.
\end{proof}

\begin{lemma}\label{lem:sGL}
For every $L\ge3$, $\iota_{\rm ord}(G_L)=L$.
\end{lemma}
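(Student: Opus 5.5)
To prove $\iota_{\rm ord}(G_L)=L$, I will apply the criterion \cite[Theorem 3.6]{LT}: $\mathfrak{m}\in\operatorname{Ass}(S/I(G_L)^t)$ if and only if there is a dominating set $U$ with $G_L[U]$ strongly non-bipartite and $\mu^*(G_L[U])<t$. The plan is to classify every such $U$ completely, compute $\mu^*$ for each, and show that the minimum value is exactly $L-1$. Then $\mathfrak{m}\in\operatorname{Ass}(S/I(G_L)^t)$ holds precisely when $t\ge L$, which gives $\iota_{\rm ord}(G_L)=L$.

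\textbf{Step 1: classify the admissible $U$.} The only odd cycles of $G_L$ are the two triangles, so every component of $G_L[U]$ must contain one of them.
\begin{itemize}
\item If $v_0\notin U$, then $a_0$ or $b_0$ must lie in $U$ (to dominate them), and its component inside $\{a_0,b_0\}$ is bipartite, which is not allowed. Hence $v_0\in U$, and then $a_0,b_0\in U$, since a component through $v_0$ not containing the triangle would be a subtree. By symmetry, the triangle $\{v_L,a_L,b_L\}$ also lies in $U$.
\item Any vertex of the path lying in $U$ but not connected to a triangle would give a bipartite path component. So $U\cap\{v_0,\dots,v_L\}$ is either the whole path, or $\{v_0,\dots,v_i\}\cup\{v_{L-j},\dots,v_L\}$ with a nonempty gap $v_{i+1},\dots,v_{L-j-1}$ of size $m=L-i-j-1\ge1$.
\item Domination of the gap vertices forces $m\le 2$.
\end{itemize}

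\textbf{Step 2: compute $\mu^*$ in each case.} Each edge of a path segment is a bridge. Lemma~\ref{lem_bridge_edge} therefore gives the lower bound on the number of even walks, and the construction in Lemma~\ref{lem_phi_GL} gives the matching upper bound. This yields:
\begin{itemize}
\item For $U=V(G_L)$: $\varphi^*=L$, so $\mu^*=(L+(L+5)-1)/2=L+2$.
\item In the two-component case, each component is a triangle with a pendant path of length $i$ (respectively $j$), contributing $i$ (respectively $j$) even walks. The components have $i+3$ and $j+3$ vertices, so
\[
\mu^*=\frac{(i+j)+(i+j+6)-2}{2}=i+j+2=L-m+1 .
\]
Equivalently, one could peel leaves using \cite[Lemma 3.9]{LT} down to two triangles, each with $\mu^*=1$.
\end{itemize}

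\textbf{Step 3: take the minimum.} The value $L-m+1$ is minimized at $m=2$, giving $L-1$. This choice is realizable exactly when $i+j=L-3\ge0$, which is where the hypothesis $L\ge3$ is used. Hence the minimum of $\mu^*(G_L[U])$ over admissible $U$ is $L-1$. By the criterion, $\mathfrak{m}\in\operatorname{Ass}(S/I(G_L)^L)$ and $\mathfrak{m}\notin\operatorname{Ass}(S/I(G_L)^t)$ for $t<L$.

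I expect the main difficulty to lie in Step 1. The exhaustive classification of dominating sets with strongly non-bipartite induced subgraph must rule out stray path vertices and confirm that both triangles are forced into $U$. Once that is done, the $\mu^*$ computations are routine applications of Lemmas~\ref{lem_bridge_edge} and~\ref{lem_phi_GL}.
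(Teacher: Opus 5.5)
Your strategy is the paper's: reduce via the Lam--Trung criterion to showing that the minimum of $\mu^*(G_L[U])$, over dominating sets $U$ with $G_L[U]$ strongly non-bipartite, equals $L-1$, and evaluate $\mu^*$ by counting bridges (equivalently, peeling leaves). Your computations for $U=V(G_L)$ and for the two-component case are correct and agree with the paper.

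The gap is in Step~1: the claim that both triangles are forced into $U$ is false. You correctly show $v_0\in U$, and symmetrically $v_L\in U$. But your justification that $a_0,b_0\in U$, because ``a component through $v_0$ not containing the triangle would be a subtree'', overlooks that the component of $v_0$ may reach the opposite triangle along the whole connecting path. For example, $U=\{v_0,\ldots,v_L,a_L,b_L\}$ is dominating, since $a_0,b_0$ are adjacent to $v_0$. Here $G_L[U]$ is a triangle with a pendant path of length $L$, which is connected and non-bipartite. Adding $a_0$ but not $b_0$ gives another admissible set. So your classification is not exhaustive. The lower bound $\mu^*(G_L[U])\ge L-1$, which is exactly what you need to conclude $\mathfrak{m}\notin\operatorname{Ass}(S/I(G_L)^t)$ for $t<L$, has not been checked for these sets.

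The repair is short, and it is precisely the middle case of the paper's proof. Suppose $G_L[U]$ is connected and contains only one triangle, say the one at $v_L$. Then $v_0\in U$ and at most one of $a_0,b_0$ lies in $U$. The odd cycle in the component of $v_0$ must be the triangle at $v_L$, so $U$ contains the entire path. Hence $G_L[U]$ is a triangle with a pendant path of length $L$ or $L+1$, all of whose pendant edges are bridges. Your bridge count via Lemma~\ref{lem_bridge_edge} gives $\varphi^*=L$ or $L+1$, hence $\mu^*=L+1$ or $L+2$, both exceeding $L-1$. With this case added, the minimum remains $L-1$ and the rest of your argument goes through.
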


\begin{proof}
By \cite[Theorem 3.6]{LT}, it suffices to prove that
\[
\min\{\mu^*(G_L[U])\mid U\subseteq V(G_L),\ U\text{ is dominating},\ G_L[U]\text{ is strongly non-bipartite}\}=L-1.
\]
Let $U$ be such a dominating set and put $H=G_L[U]$. Since the only odd cycles of $G_L$ are its two endpoint triangles, $H$ has at most two connected components. By repeated application of \cite[Lemma 3.9]{LT}, a triangle with an attached path of length $p$ has
\begin{equation}\label{eq_triangle}
\mu^*=p+1.
\end{equation}

If $H$ has two connected components, both endpoint triangles are contained in $U$. The vertices of $U$ on the connecting path form a prefix $v_0,\ldots,v_i$ and a suffix $v_j,\ldots,v_L$. The omitted middle block has length $r=j-i-1$, where $1\le r\le2$ by domination. Hence
\[
\mu^*(H)=(i+1)+(L-j+1)=L-r+1\ge L-1.
\]
The value $L-1$ is attained by omitting two consecutive internal vertices of the connecting path.

If $H$ is connected and contains only one endpoint triangle, it must contain the entire connecting path and possibly one further vertex of the opposite endpoint triangle. It is therefore a triangle with a path of length $L$ or $L+1$ attached, so $\mu^*(H)\ge L+1$. Finally, if $H$ is connected and contains both endpoint triangles, then $H=G_L$, and Lemma \ref{lem_phi_GL} gives
\[
\mu^*(H)=\frac{L+(L+5)-1}{2}=L+2.
\]
Taking the minimum gives the claimed formula.
\end{proof}

The integral closure first appearance index can also be determined exactly.

\begin{lemma}\label{lem:int_GL}
For every integer $L\ge3$,
\[
\iota_{\rm int}(G_L)=\left\lceil\frac{L+6}{2}\right\rceil.
\]
\end{lemma}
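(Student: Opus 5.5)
The plan is to prove the two inequalities $\iota_{\rm int}(G_L)\le\lceil (L+6)/2\rceil$ and $\iota_{\rm int}(G_L)\ge\lceil (L+6)/2\rceil$ separately. For the first we exhibit explicit witnesses; for the second we use Lemma~\ref{lem_capacity}, which reduces $\mathfrak m\in\operatorname{Ass}(S/\overline{I^t})$ to the existence of an exponent vector $\mathbf a$ with $\deg(x^{\mathbf a})=2t-1$ satisfying the weighted Hall inequalities $\mathbf a(N(U))\ge \mathbf a(U)+1$.

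\emph{Upper bound.} Recall $|V(G_L)|=L+5$.
\begin{itemize}
\item If $L$ is even, then $L+5$ is odd. Lemma~\ref{lem:GLhall} gives the strict Hall condition, so Lemma~\ref{lem:stricthall} yields $\mathfrak m\in\operatorname{Ass}(S/\overline{I^t})$ with $t=(L+6)/2=\lceil (L+6)/2\rceil$.
\item If $L$ is odd, set $t=(L+7)/2$ and $f=x_{v_0}\prod_{v\in V(G_L)}x_v$, so $\deg f=L+6=2t-1$. For an independent $U$ with $v_0\in U$, the ``moreover'' part of Lemma~\ref{lem:GLhall} gives $\mathbf a(N(U))\ge |N(U)|\ge |U|+2=\mathbf a(U)+1$. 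If $v_0\notin U$, then $\mathbf a(U)=|U|$ and $\mathbf a(N(U))\ge |N(U)|\ge |U|+1$.
\end{itemize}
In both cases Lemma~\ref{lem_capacity} gives $(\overline{I^t}:f)=\mathfrak m$.

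\emph{Lower bound.} Since $2t-1$ is odd, it suffices to show that every $\mathbf a$ satisfying the weighted Hall inequalities has $\deg(x^{\mathbf a})\ge L+5$. For $L$ even this gives $t\ge (L+6)/2$. For $L$ odd, $L+5$ is even, so $2t-1\ge L+6$ and $t\ge (L+7)/2$. The natural route is to show $a_u\ge1$ for every vertex $u$.
\begin{itemize}
\item \emph{Triangle vertices.} For $U=\{a_0\}$ the inequality reads $a_{v_0}+a_{b_0}\ge a_{a_0}+1$, and similarly for the other triangle vertices.
\item \emph{Path vertices.} For each path vertex $v_i$ we use the saturation argument from the proof of Lemma~\ref{lem_capacity}. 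For every $w$, the vector $\mathbf b=\mathbf a+\mathbf e_w$ admits a fractional matching that saturates all capacities. If $a_{v_i}=0$ with $0<i<L$, both bridges at $v_i$ carry zero weight, provided $w\ne v_i$. Then $G_L-v_i$ splits into two pieces, and each piece must be perfectly fractionally saturated by $\mathbf a$ alone, apart from the single piece containing $w$. Choosing $w$ alternately in the two pieces shows that each piece is saturated both by $\mathbf a$ and by $\mathbf a+\mathbf e_w$. I would try to contradict this with the Hall inequality applied to a suitable independent set inside a piece. A candidate is the set $U$ of alternate path vertices ending at $v_{i\pm1}$ together with $a_0$ or $a_L$; for this $U$ the fractional saturation forces $\mathbf a(N(U))=\mathbf a(U)$.
\end{itemize}

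An alternative for this step, which I would try if positivity fails, is to sum a well-chosen family of Hall inequalities. One family is $\{v_i\}$ for every interior $i$, $\{a_0\}$, $\{b_0\}$, $\{a_L\}$, $\{b_L\}$, and the sets of alternate path vertices. The hope is that the left-hand sides telescope to a bound of the form $2\,\mathbf a(V)\ge 2(L+5)-1$. Since $\mathbf a(V)$ is an integer, this would give $\mathbf a(V)\ge L+5$.

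The main obstacle is this lower bound. The Hall inequalities are local, and a priori a witness could concentrate its weight (e.g.\ $a_u\ge2$ on some vertices and $0$ on others) to get total degree below $L+5$. The bridges of the long path are what should forbid zeros, because a zero on a path vertex cuts the fractional saturation. Making this cut argument rigorous, or finding the right summed combination of inequalities, is the step I expect to require the most care.
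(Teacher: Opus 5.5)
Your upper bound, and your reduction of the lower bound to showing that every exponent vector satisfying the inequalities of Lemma~\ref{lem_capacity} has degree at least $L+5$, agree with the paper. The gap is the lower bound itself.

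\textbf{The positivity route fails.} The claim $a_u\ge 1$ for every vertex is false, so neither of your bullets can be completed. The inequality $a_{v_0}+a_{b_0}\ge a_{a_0}+1$ gives no lower bound on $a_{a_0}$ at all. Concretely, for $L=3$ the following two vectors both have degree $9$ and satisfy every inequality of Lemma~\ref{lem_capacity}. They are therefore witnesses for $\overline{I(G_3)^5}$ of the minimal possible degree.
\begin{itemize}
\item[(i)] Exponent $1$ on $a_0,b_0,v_0$, exponent $0$ on $v_1,v_2$, and exponent $2$ on $v_3,a_3,b_3$. Since $v_1,v_2$ carry weight $0$, every inequality reduces to the strict triangle inequalities on the two triangles.
\item[(ii)] $a_{a_0}=a_{b_0}=0$, $a_{v_0}=1$, $a_{v_1}=a_{v_2}=a_{v_3}=2$, and $a_{a_3}=a_{b_3}=1$.
\end{itemize}
So your cut argument at a zero path vertex cannot produce a contradiction. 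Moreover, saturation of a piece only yields $\mathbf a(N(U))\ge\mathbf a(U)$, never the equality you want.

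\textbf{The summation fallback fails for a structural reason.} The Hall inequalities make sense over $\mathbb R$. The real vector with $1/2$ on $a_0,b_0,a_L,b_L$ and $1$ on every $v_i$ satisfies all of them, with total $L+3$. Hence no nonnegative combination of these inequalities can give $2\mathbf a(V)\ge 2L+9$. Integrality has to be used in an essential way.

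\textbf{The missing idea.} The paper works with the support $U=\operatorname{supp}(f)$ instead of trying to forbid zeros.
\begin{itemize}
\item Singleton inequalities make $U$ dominating.
\item A bipartite component of $G_L[U]$ with parts $A,B$ would force both $\mathbf a(B)\ge\mathbf a(A)+1$ and $\mathbf a(A)\ge\mathbf a(B)+1$. So every component contains a triangle, and is either all of $G_L$ or a triangle with a tail $u_0,\dots,u_p$.
\item On such a component, the inequalities for the alternate sets $\{u_{i+1},u_{i+3},\dots\}$ give alternating sums $b_i\ge1$. Hence $a_{u_i}=b_i+b_{i+1}\ge2$ for $i<p$.
\item The two other triangle vertices have positive integer exponents, so the component has weight at least $2p+3$.
\item A case analysis on the shape of $U$ then gives $\deg f\ge L+5$.
\end{itemize}

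If you carry this out, note one point in the two-component case. When $r$ path vertices are omitted, the tail lengths satisfy $p+q=L-r-1$, so the bound is $2L-2r+4\ge 2L$. For $L=4$ you need parity to reach $L+5$. For $L=3$ you additionally need the fact that a triangle of total weight $4$ violates the strict triangle inequalities.
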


\begin{proof}
First suppose that $L$ is even. By Lemmas \ref{lem:GLhall} and \ref{lem:stricthall}, the product of all $L+5$ variables is a witness for the maximal ideal of $\overline{I(G_L)^{(L+6)/2}}$. If $L$ is odd, take the same product multiplied by $x_{v_0}$. The additional assertion of Lemma \ref{lem:GLhall} shows that its exponent vector satisfies the inequalities of Lemma \ref{lem_capacity}, and its total degree is $L+6$. It is therefore a witness for $\overline{I(G_L)^{(L+7)/2}}$. In both cases we obtain
\[
\iota_{\rm int}(G_L)\le\left\lceil\frac{L+6}{2}\right\rceil.
\]

To prove the reverse inequality, let $f=x^{\mathbf{a}}$ be a monomial such that $(\overline{I(G_L)^s}:f)=\mathfrak{m}$, and set $U=\operatorname{supp}(f)$. By Lemma \ref{lem_capacity}, $U$ is a dominating set. Moreover, each connected component of $G_L[U]$ is non-bipartite. Indeed, if such a component had bipartition $A\cup B$, both parts would be independent, and all neighbors of $A$ and $B$ outside this component would have exponent zero. The inequalities in Lemma \ref{lem_capacity} would therefore give $\mathbf{a}(B)\ge\mathbf{a}(A)+1$ and $\mathbf{a}(A)\ge\mathbf{a}(B)+1$, a contradiction. The same argument with a singleton rules out an isolated vertex.

We need a bound for the weight of a component $W$ of $G_L[U]$ consisting of a triangle with an attached path $u_0,u_1,\ldots,u_p$, where $u_0$ is a vertex of the triangle. We claim that
\[
\mathbf{a}(W)\ge2p+3.
\]
This is clear if $p=0$. Suppose $p\ge1$. For $0\le i\le p-1$, the set $\{u_{i+1},u_{i+3},\ldots\}\cap\{u_0,\ldots,u_p\}$ is independent. Its neighbors of positive exponent are precisely the alternate vertices $u_i,u_{i+2},\ldots$. Thus Lemma \ref{lem_capacity} gives
\[
b_i:=a_{u_i}-a_{u_{i+1}}+\cdots+(-1)^{p-i}a_{u_p}\ge1
\quad(0\le i\le p-1).
\]
Since $b_p=a_{u_p}\ge1$, it follows that $a_{u_i}=b_i+b_{i+1}\ge2$ for $0\le i<p$. The other two vertices of the triangle have positive exponents. This proves $\mathbf{a}(W)\ge2p+3$.

Since $G_L$ has only the two endpoint triangles, the strongly non-bipartite graph $G_L[U]$ has at most two connected components. If it is connected and contains both triangles, then $U=V(G_L)$, so $\deg(f)\ge L+5$. If it is connected and contains only one triangle, it is a triangle with an attached path of length $p=L$ or $L+1$. Hence $\deg(f)\ge2L+3\ge L+5$.

Finally, suppose that $G_L[U]$ has two components. Both endpoint triangles then belong to $U$, and the vertices omitted from the connecting path form a block of length $r\le2$ by domination. If the two attached paths have lengths $p$ and $q$, then $p+q=L-r$. Applying the preceding estimate to the two components yields
\[
\deg(f)\ge(2p+3)+(2q+3)=2L-2r+6\ge2L+2\ge L+5,
\]
where the last inequality holds for $L\ge3$.

Thus $\deg(f)\ge L+5$ in all cases. Lemma \ref{lem_deg_maximal_colon} gives $2s-1=\deg(f)$, so $s\ge\lceil(L+6)/2\rceil$. This proves the formula.
\end{proof}

\begin{theorem}\label{thm:counterexample}
For every integer $r\ge1$, there exists a graph $G$ such that
\[
\iota_{\rm ord}(G)-\iota_{\rm int}(G)=r.
\]
\end{theorem}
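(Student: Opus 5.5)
The plan is to use the family $G_L$ of two triangles joined by a path of length $L$, for which both first-appearance indices have already been computed exactly. By Lemma~\ref{lem:sGL}, $\iota_{\rm ord}(G_L)=L$ for every $L\ge3$. By Lemma~\ref{lem:int_GL}, $\iota_{\rm int}(G_L)=\lceil (L+6)/2\rceil$ for every $L\ge3$. Hence, for all $L\ge3$,
\[
\iota_{\rm ord}(G_L)-\iota_{\rm int}(G_L)=L-\left\lceil\frac{L+6}{2}\right\rceil=\left\lfloor\frac{L-6}{2}\right\rfloor .
\]

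Given $r\ge1$, I would take $L=2r+6$. Then $L\ge8\ge3$, so both lemmas apply. Since $L$ is even, the ceiling is exact and $\iota_{\rm int}(G_L)=(L+6)/2=r+6$, while $\iota_{\rm ord}(G_L)=2r+6$. Their difference is $r$, as required. (Taking $L=2r+7$ also works, since then $\lceil (L+6)/2\rceil=r+7$.)

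All the substantive work, namely the Lam--Trung computation of $\mu^*$ for dominating strongly non-bipartite induced subgraphs and the weighted Hall lower bound on witness degrees, is already contained in Lemmas~\ref{lem:sGL} and~\ref{lem:int_GL}. The only thing left to check is that the chosen $L$ satisfies the hypothesis $L\ge3$ of those lemmas, which it does.
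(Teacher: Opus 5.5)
Your proposal is correct and is exactly the paper's argument: take $G=G_L$ with $L=2r+6$, read off $\iota_{\rm ord}(G_L)=L$ from Lemma~\ref{lem:sGL} and $\iota_{\rm int}(G_L)=(L+6)/2$ from Lemma~\ref{lem:int_GL}, and subtract. Your side remarks are also right: the general identity $\iota_{\rm ord}(G_L)-\iota_{\rm int}(G_L)=\lfloor (L-6)/2\rfloor$ holds for $L\ge3$, and the choice $L=2r+7$ works as well.
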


\begin{proof}
Take $L=2r+6$ and let $G=G_L$. By Lemma \ref{lem:sGL}, $\iota_{\rm ord}(G)=L$, while Lemma \ref{lem:int_GL} gives $\iota_{\rm int}(G)=(L+6)/2$. Hence
\[
\iota_{\rm ord}(G)-\iota_{\rm int}(G)=L-\frac{L+6}{2}=\frac L2-3=r
\]
as claimed.
\end{proof}

\begin{remark}
Mart\'inez-Bernal, Morey, and Villarreal showed that the ordinary and integral closure associated prime filtrations have the same eventual stable set, while their Example 3.9 shows that the two stability indices may differ \cite{MMV}. Theorem \ref{thm:counterexample} gives a stronger first appearance phenomenon: the maximal ideal may occur in an integral closure arbitrarily earlier than in the corresponding ordinary powers.
\end{remark}

The reverse difference can also be made arbitrarily large. We first compute both indices for disjoint unions of triangles.

\begin{lemma}\label{lem:triangles_indices}
Let $H=qK_3$ be the disjoint union of $q\ge1$ triangles. Then
\[
\iota_{\rm ord}(H)=q+1
\quad\text{and}\quad
\iota_{\rm int}(H)=
\begin{cases}
(3q+1)/2 & \text{if }q\text{ is odd},\\
(3q+4)/2 & \text{if }q\text{ is even}.
\end{cases}
\]
\end{lemma}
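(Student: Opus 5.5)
The plan is to compute the two indices separately, using \cite[Theorem 3.6]{LT} for $\iota_{\rm ord}$ and the exact criterion of Lemma \ref{lem_capacity} for $\iota_{\rm int}$. In both cases the first step is to show that any witness must involve every vertex of $H$.

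\textbf{The ordinary index.} Let $U\subseteq V(H)$ be dominating with $H[U]$ strongly non-bipartite.
\begin{itemize}
\item A proper subset of a triangle induces a bipartite graph, an edge or a single vertex. So every component of $H[U]$ is a whole triangle.
\item Domination forces $U$ to meet every triangle, so $U=V(H)$.
\item In an odd-beginning generalized ear decomposition of a triangle, the first walk is the triangle itself and no further walks are needed. Hence $\varphi^*(H)=0$.
\item Therefore $\mu^*(H)=(0+3q-q)/2=q$.
\end{itemize}
By \cite[Theorem 3.6]{LT}, $\mathfrak m\in\operatorname{Ass}(S/I(H)^t)$ if and only if $q<t$, which gives $\iota_{\rm ord}(H)=q+1$.

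\textbf{Reducing the integral index to single triangles.} Let $f=x^{\mathbf a}$ satisfy $(\overline{I(H)^s}:f)=\mathfrak m$. The argument in the proof of Lemma \ref{lem:int_GL} shows that $\operatorname{supp}(f)$ is dominating and that every component of the induced subgraph on it is non-bipartite. As above, this gives $\operatorname{supp}(f)=V(H)$.

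Next I rewrite the condition of Lemma \ref{lem_capacity}.
\begin{itemize}
\item An independent set $U$ picks at most one vertex $v_i$ from each of some triangles $T_i$, and $N_H(U)$ consists of the other two vertices of each such $T_i$.
\item The inequality $\mathbf a(N_H(U))\ge\mathbf a(U)+1$ becomes $\sum_i(\mathbf a(T_i)-2a_{v_i})\ge1$.
\item Taking $U$ to be a single vertex gives $\mathbf a(T)-2a_v\ge1$ for each triangle $T$ and each $v\in T$. These single-vertex inequalities imply all the others by summing.
\end{itemize}
So the condition is exactly a strict triangle inequality on each triangle: for its exponents $(a,b,c)$, each entry is at most the sum of the other two minus one. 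Together with Lemma \ref{lem_capacity}, $\mathfrak m\in\operatorname{Ass}(S/\overline{I(H)^s})$ if and only if there are positive integer triples, one per triangle, each satisfying the strict triangle inequalities, with total sum $2s-1$.

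\textbf{Which triangle weights occur, and the main obstacle.} The key step is to determine which totals $w=a+b+c$ are achievable by a single triangle.
\begin{itemize}
\item Writing $w=(b+c-a)+2a$ with $b+c-a\ge1$ and $a\ge1$ shows $w\ge3$.
\item $w=3$ is realized by $(1,1,1)$.
\item $w=4$ is impossible: up to symmetry the only positive triple is $(2,1,1)$, and $1+1\ge2+1$ fails.
\item Every $w\ge5$ is realized. Take $(k,k,k)$ if $w=3k$, and otherwise $(k,k,k\pm1)$ with $k\ge2$, for example $(2,2,1)$ for $w=5$.
\end{itemize}
So the achievable weights are exactly $\{3\}\cup\{w\ge5\}$, and the achievable totals are sums of $q$ such weights.

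\textbf{Finishing.} The total must equal $2s-1$, so it is odd and at least $3q$.
\begin{itemize}
\item If $q$ is odd, the total $3q$ is achieved by all weights equal to $3$, so $\iota_{\rm int}(H)=(3q+1)/2$.
\item If $q$ is even, $3q$ is even and hence not allowed. The total $3q+1$ would need exactly one triangle of weight $4$, which is impossible. The total $3q+3$ is achieved by one triangle of weight $6$ and the rest of weight $3$. This gives $2s-1=3q+3$, that is, $\iota_{\rm int}(H)=(3q+4)/2$.
\end{itemize}
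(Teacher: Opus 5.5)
Your proof is correct, and it departs from the paper's route for $\iota_{\rm ord}(H)$.

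\textbf{Integral closure index.} For $\iota_{\rm int}(H)$ you follow essentially the paper's argument. Lemma~\ref{lem_capacity} reduces to strict triangle inequalities on each triangle, total weight $4$ is excluded, and the triples $(1,1,1)$ and $(2,2,2)$ give the constructions. There are two minor differences. You get positivity of the exponents from the domination and non-bipartiteness argument of Lemma~\ref{lem:int_GL}, whereas the paper reads it directly off the triangle inequalities. You also classify all achievable triangle weights as $\{3\}\cup\{w\ge5\}$, while the paper only needs $3$, $6$ and the exclusion of $4$.

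\textbf{Ordinary index.} Here you invoke \cite[Theorem 3.6]{LT}, show that the only dominating set inducing a strongly non-bipartite subgraph is $V(H)$, and compute $\varphi^*(qK_3)=0$, hence $\mu^*(qK_3)=q$. The paper instead argues directly with monomials. It shows that $F=\prod_v x_v$ satisfies $(I(H)^{q+1}:F)=\mathfrak m$. Conversely, for any monomial $x^{\mathbf a}$ with $(I(H)^s:x^{\mathbf a})=\mathfrak m$, the largest number of edge factors dividing $x^{\mathbf a}$ is exactly $s-1$. This number is additive over the triangles, and each triangle must contribute at least one, since otherwise multiplying by a suitable variable would not increase it.

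\textbf{What each route gives.} The paper's argument is self-contained, avoids ear decompositions, and produces an explicit witness of degree $3q$ (the same kind of witness reused in the $v$-number section). Your argument is shorter once the Lam--Trung machinery is granted, and it shows at once that $\mathfrak m\in\operatorname{Ass}(S/I(H)^t)$ exactly when $t\ge q+1$. Likewise, your weight classification determines every $s$ with $\mathfrak m\in\operatorname{Ass}(S/\overline{I(H)^s})$, namely all $s\ge\iota_{\rm int}(H)$. Both of these are slightly more than the paper records.
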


\begin{proof}
Let $F$ be the product of all variables of $H$. Each triangle contributes one edge factor to $F$, whereas multiplying $F$ by any variable allows two edge factors in the corresponding triangle. Thus $(I(H)^{q+1}:F)=\mathfrak{m}$ and $\iota_{\rm ord}(H)\le q+1$.

Conversely, suppose that $(I(H)^s:x^{\mathbf{a}})=\mathfrak{m}$. The matching capacity of $\mathbf{a}$ is $s-1$, and it is the sum of the matching capacities on the $q$ triangles. Each triangle must have positive matching capacity: if its support contained at most one vertex, multiplying by a variable already in that support (or by any variable if the support is empty) would not increase its matching capacity, contradicting the colon condition. Hence $s-1\ge q$, and $\iota_{\rm ord}(H)=q+1$.

Now suppose that $(\overline{I(H)^s}:x^{\mathbf{a}})=\mathfrak{m}$. For a triangle with vertices $u,v,w$, Lemma \ref{lem_capacity} gives
\[
a_v+a_w\ge a_u+1,\quad
a_u+a_w\ge a_v+1,\quad
a_u+a_v\ge a_w+1.
\]
These inequalities force all three exponents to be positive. Indeed, if $a_u=0$, the last two inequalities would give simultaneously $a_w\ge a_v+1$ and $a_v\ge a_w+1$. Thus $\deg(x^{\mathbf{a}})\ge3q$. By Lemma \ref{lem_deg_maximal_colon}, this degree is odd. When $q$ is odd, it follows that $2s-1\ge3q$. When $q$ is even, no triangle can have total weight four: up to permutation, the only positive integer triple with sum four is $(2,1,1)$, which violates one of the displayed inequalities. Consequently, the smallest possible odd total degree is $3q+3$.

These lower bounds are attained. If $q$ is odd, take all exponents equal to one. If $q$ is even, take exponents $(2,2,2)$ on one triangle and $(1,1,1)$ on all the others. In each case the inequalities of Lemma \ref{lem_capacity} hold on every triangle, hence on their disjoint union. The stated values of $\iota_{\rm int}(H)$ follow.
\end{proof}

\begin{theorem}\label{thm:reverse_counterexample}
For every integer $r\ge1$, there exists a graph $G$ such that
\[
\iota_{\rm int}(G)-\iota_{\rm ord}(G)=r.
\]
\end{theorem}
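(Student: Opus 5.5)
Take $G=H=qK_3$, the disjoint union of $q$ triangles, and choose $q$ so that the difference computed in Lemma~\ref{lem:triangles_indices} equals $r$. That lemma gives $\iota_{\rm ord}(H)=q+1$. For $q$ odd,
\[
\iota_{\rm int}(H)-\iota_{\rm ord}(H)=\frac{3q+1}{2}-(q+1)=\frac{q-1}{2},
\]
and for $q$ even,
\[
\iota_{\rm int}(H)-\iota_{\rm ord}(H)=\frac{3q+4}{2}-(q+1)=\frac{q+2}{2}.
\]

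The odd case alone suffices. Set $q=2r+1$, which is odd and at least $3$ for $r\ge1$. Then
\[
\iota_{\rm int}(qK_3)=\frac{3(2r+1)+1}{2}=3r+2
\qquad\text{and}\qquad
\iota_{\rm ord}(qK_3)=2r+2,
\]
so the difference is exactly $r$. The even family gives an alternative: $q=2r-2$ for $r\ge2$ yields difference $r$, while $r=1$ would require $q=0$, so it is excluded there. The odd choice covers every $r$ uniformly, and I would use it.

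There is essentially no obstacle, since all the work is in Lemma~\ref{lem:triangles_indices}. The only points to check are that $q\ge1$ is satisfied and that the parity case used matches the choice of $q$. I would close with a remark that, together with Theorem~\ref{thm:counterexample}, the two first-appearance indices are incomparable in general, and each can exceed the other by an arbitrary amount.
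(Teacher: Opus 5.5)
Your proposal is correct and is essentially the paper's proof: take $G=(2r+1)K_3$ and read off $\iota_{\rm int}(G)=3r+2$ and $\iota_{\rm ord}(G)=2r+2$ from Lemma~\ref{lem:triangles_indices}. Your side remark on the even family ($q=2r-2$, valid only for $r\ge2$) is also accurate, but it is not needed.
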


\begin{proof}
Take $G=(2r+1)K_3$. By Lemma \ref{lem:triangles_indices},
\[
\iota_{\rm int}(G)=3r+2
\quad\text{and}\quad
\iota_{\rm ord}(G)=2r+2.
\]
Their difference is $r$, as required.
\end{proof}

In particular, neither of the two associated prime containments holds for all graphs when $t\ge4$ is fixed. Indeed, Theorem \ref{thm:noncontainment} supplies one direction. For the other, take $G=(t-1)K_3$. Then $\iota_{\rm ord}(G)=t$, while Lemma \ref{lem:triangles_indices} gives $\iota_{\rm int}(G)>t$. Thus $\mathfrak{m}\in\operatorname{Ass}(S/I(G)^t)$ but $\mathfrak{m}\notin\operatorname{Ass}(S/\overline{I(G)^t})$.

\section{Comparison of v-numbers}

This section is devoted to the study of $v$-numbers of powers of an edge ideal and their integral closures. We start with a result for general squarefree monomial ideals.

\begin{lemma}
Let $I$ be a squarefree monomial ideal and let $P$ be a minimal prime of $I$. Then, for every $t\ge 1$,
\[
v_P(\overline{I^t})\le v_P(I^t).
\]
\end{lemma}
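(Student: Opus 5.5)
The plan is to show that a single monomial witnesses both local $v$-numbers. We take a monomial $f$ with $(I^t:f)=P$ and $\deg(f)=v_P(I^t)$, and prove that $(\overline{I^t}:f)=P$ for the same $f$. This immediately gives $v_P(\overline{I^t})\le\deg(f)=v_P(I^t)$.

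\emph{Monomial witness.} The first step is to justify that a minimizing $f$ may be taken to be a monomial. For monomial ideals $J$ and monomial primes $P$, the standard fact (as in Jaramillo--Villarreal) is that $v_P(J)$ is attained by a monomial $f$ with $(J:f)=P$. I would cite this. Then write $f=u\,w$, where $u$ involves only the variables of $P$ and $w$ only the variables outside $P$.

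\emph{Localization.} Since $I$ is squarefree and $P$ is a minimal prime, inverting the variables outside $P$ gives $IS_P=PS_P$. Hence $I^tS_P=P^tS_P$. Integral closure commutes with localization, so $\overline{I^t}S_P=\overline{P^tS_P}$. Moreover $P^tS_P$ is integrally closed, since $PS_P$ is generated by a regular sequence of variables. Therefore
\[
\overline{I^t}S_P=P^tS_P=I^tS_P .
\]

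\emph{Key step: $u\notin P^t$.} Suppose instead that $u\in P^t$. Then $f\in I^tS_P$, so some monomial $m$ in the variables outside $P$ satisfies $mf\in I^t$. This gives $m\in(I^t:f)=P$, which is impossible. Hence $u\notin P^t$.

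\emph{Conclusion.} The ideal $(\overline{I^t}:f)$ is monomial and contains $(I^t:f)=P$. It remains to check that no monomial $h\notin P$ lies in it. If $hf\in\overline{I^t}$, then localizing gives $hf\in P^tS_P$. The factors $h$ and $w$ are units in $S_P$, so $u\in P^tS_P\cap S$. This intersection is $P^t$, because $u$ is a monomial in the variables of $P$. That contradicts the key step. Hence $(\overline{I^t}:f)=P$, as required.

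The main obstacle is the bookkeeping in the localization step: that integral closure commutes with localization, and that $P^tS_P$ contracts to monomials of $P$-degree at least $t$. Both are standard, and the rest of the argument is formal.
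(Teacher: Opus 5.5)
Your proof is correct, but it compares $I^t$ and $\overline{I^t}$ differently from the paper.

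\textbf{The paper's route.} It uses $IS_P=PS_P$ only to get $\deg_P(f)=t-1$; your key step $u\notin P^t$ is the half of this you actually need. It then argues globally. For each minimal prime $Q\neq P$ it picks $x_i\in P\setminus Q$ and uses that $Q^t$ is $Q$-primary to get $f\in Q^t$. Hence $(I^{(t)}:f)=(P^t:f)=P$. It concludes from $I^t\subseteq\overline{I^t}\subseteq I^{(t)}$, the last inclusion holding because $I^{(t)}=\bigcap_Q Q^t$ is an intersection of integrally closed ideals.

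\textbf{Your route.} You stay at $P$. Integral closure commutes with localization and $P^tS_P$ is integrally closed, so $\overline{I^t}S_P=I^tS_P$, and the colon condition transfers. This never involves the other minimal primes. It also isolates the underlying principle: if $J\subseteq J'$, $JS_P=J'S_P$ and $(J:f)=P$, then $(J':f)=P$. Indeed, any $h\notin P$ with $hf\in J'$ gives $f\in J'S_P=JS_P$. Hence $sf\in J$ for some $s\notin P$, contradicting $(J:f)=P$.

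Seen this way, the reduction to a monomial witness (and the Jaramillo--Villarreal citation) and the splitting $f=uw$ are dispensable. The argument works verbatim for any $f$ with $(I^t:f)=P$, and it applies whenever $\overline{I^t}$ and $I^t$ agree after localizing at $P$.

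\textbf{What the paper's version buys.} It does not need the fact that integral closure commutes with localization. It also records the exact value $\deg_P(f)=t-1$, and shows that the same $f$ is a witness for $I^{(t)}$ as well.
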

\begin{proof}
If $I=0$, the assertion is immediate. We may therefore assume that $I\ne0$. Let $f$ be a monomial such that $(I^t:f)=P$. Since $I$ is squarefree and $P$ is a minimal prime, $IS_P=PS_P$. Localizing the colon equality at $P$ gives
$
(P^tS_P:f)=PS_P,
$ 
which implies that $\deg_P(f)=t-1$.

Let $Q\in\Min(S/I)$ with $Q\ne P$. Choose a variable $x_i\in P\setminus Q$. Since $x_i f\in I^t\subseteq Q^t$ and $Q^t$ is $Q$-primary, we have $f\in Q^t$. It follows that
\[
(I^{(t)}:f)=\bigcap_{Q\in\Min(S/I)}(Q^t:f)=(P^t:f)=P.
\]
Since $I^t\subseteq\overline{I^t}\subseteq I^{(t)}$, we obtain
\[
P=(I^t:f)\subseteq(\overline{I^t}:f)\subseteq(I^{(t)}:f)=P.
\]
Thus $(\overline{I^t}:f)=P$, proving the assertion.
\end{proof}

When $I=I(G)$ is the edge ideal of a graph, we have the following stronger result.

\begin{theorem}\label{thm_local_v}
Let $G$ be a simple graph and let $I=I(G)$ be its edge ideal. Then, for every $t\ge 1$, every minimal prime $P\in\Min(S/I)$, and every monomial $f\in S$, we have
\[
(I^t:f)=P\quad\Longleftrightarrow\quad(\overline{I^t}:f)=P.
\]
In particular,
\[
v_P(\overline{I^t})=v_P(I^t).
\]
\end{theorem}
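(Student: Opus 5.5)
The forward implication $(I^t:f)=P\Rightarrow(\overline{I^t}:f)=P$ is exactly the argument of the preceding lemma, since $I(G)$ is squarefree and $P$ is minimal. So the work is in the reverse implication. Given that, the equality $v_P(\overline{I^t})=v_P(I^t)$ follows by taking a monomial witness of minimal degree on either side. Write $f=x^{\mathbf{a}}$ and let $C$ be the minimal vertex cover with $P=(x_v\mid v\in C)$.

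Assume $(\overline{I^t}:f)=P$. Since $I^t\subseteq\overline{I^t}$, we have $(I^t:f)\subseteq P$, so it suffices to show $x_vf\in I^t$ for every $v\in C$.

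\emph{Step 1: compute $\deg_C(f)$.} I would first pin down $\deg_C(f)=t-1$ by localizing at $P$. Integral closure commutes with localization and $IS_P=PS_P$. Moreover $P^tS_P$ is integrally closed, being a power of the maximal ideal of a regular local ring. Hence $\overline{I^t}S_P=P^tS_P$, and the colon equality becomes $(P^tS_P:f)=PS_P$. This forces $\deg_C(f)=t-1$, so $\deg_C(x_vf)=t$ for each $v\in C$.

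\emph{Step 2: concentrate the fractional matching on $C$-to-complement edges.} Fix $v\in C$ and set $\mathbf{b}=\mathbf{a}+\mathbf{e}_v$. By Lemma~\ref{lem_criterion_int} there is a fractional matching $z$ with capacity $\mathbf{b}$ and total weight at least $t$. Every edge meets $C$, since $C$ is a cover. Summing the capacity constraints over $C$ gives
\[
\sum_e z_e+\sum_{e\subseteq C}z_e\;\le\;\mathbf{b}(C)=t.
\]
Therefore the total weight is exactly $t$, and $z_e=0$ for every edge with both endpoints in $C$. So $z$ is a fractional $\mathbf{b}$-matching of weight $t$ in the bipartite subgraph $B$ of $G$ consisting of the edges between $C$ and $V(G)\setminus C$.

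\emph{Step 3: integralize.} The fractional $\mathbf{b}$-matching polytope of a bipartite graph is integral, because its constraint matrix is totally unimodular. Hence $B$ admits an integral $\mathbf{b}$-matching of total size $t$. This is a multiset of $t$ edges whose product divides $x^{\mathbf{b}}=x_vf$, so $x_vf\in I^t$. Since $v\in C$ was arbitrary, $P\subseteq(I^t:f)$, which completes the reverse implication.

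The main obstacle is Step~2, specifically the observation that weight on edges inside $C$ costs double. Together with Step~1, this is what converts a fractional matching in the possibly non-bipartite graph $G$ into one in a bipartite graph, where integral closure and ordinary powers agree. Step~3 then relies only on the standard total unimodularity of bipartite incidence matrices.
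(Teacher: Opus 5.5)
Your proposal is correct and follows the paper's proof. You establish $\deg_C(f)=t-1$ by localizing at $P$, as in the preceding lemma, rather than via the paper's neighbor-product argument. You then use $\deg_C(x_vf)=t$ to force all the weight onto edges between $C$ and its complement, exactly as in Lemma~\ref{lem_int_t}, with fractional matchings and total unimodularity in place of the paper's convex combinations of generators and the normality of bipartite edge ideals from \cite{SVV}. The only nit is that Step~1's conclusion fails when $I=0$ and $t\ge2$, but then $C=\emptyset$ and the reverse implication is vacuous, so nothing is lost.
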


To establish Theorem \ref{thm_local_v}, we need the following lemma.

\begin{lemma}\label{lem_int_t}
Let $I=I(G)$ be the edge ideal of a simple graph $G$, and let $C$ be a vertex cover of $G$. Let $u=x^{\mathbf a}$ be a monomial of $S$ such that $\deg_C(u)=t$ and $u\in\overline{I^t}$. Then $u\in I^t$.
\end{lemma}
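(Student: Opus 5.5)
The plan is to translate membership in $\overline{I^t}$ into a fractional matching via Lemma~\ref{lem_criterion_int}, and then to show that the vertex cover $C$ forces this fractional matching to live on a bipartite subgraph. On a bipartite graph, fractional matchings with capacities can be replaced by integral ones, and an integral $b$-matching of size $t$ is exactly a product of $t$ edges dividing $u$.

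\emph{Step 1 (an upper bound from the cover).} Since $u=x^{\mathbf a}\in\overline{I^t}$, Lemma~\ref{lem_criterion_int} gives a fractional matching $z=(z_e)$ with capacity $\mathbf a$ and $\sum_e z_e\ge t$. Every edge meets $C$, so
\[
\sum_{e\in E(G)} z_e\;\le\;\sum_{e\in E(G)} z_e\,|e\cap C|\;=\;\sum_{v\in C}\sum_{e\ni v} z_e\;\le\;\mathbf a(C)=\deg_C(u)=t .
\]
Hence all these inequalities are equalities. The first equality forces $z_e=0$ for every edge $e$ with both endpoints in $C$.

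\emph{Step 2 (reduction to a bipartite graph).} Let $B$ be the bipartite subgraph of $G$ on $V(G)$ whose edges are those with exactly one endpoint in $C$; its parts are $C$ and $V(G)\setminus C$. By Step 1, $z$ is supported on $E(B)$, so $z$ is a fractional matching of $B$ with capacity $\mathbf a$. Therefore $\nu_B^*(\mathbf a)\ge t$. The argument of Step 1 applied to $B$ also gives $\nu_B^*(\mathbf a)\le t$, so $\nu_B^*(\mathbf a)=t$.

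\emph{Step 3 (integrality).} The vertex--edge incidence matrix of the bipartite graph $B$ is totally unimodular. The capacities $\mathbf a$ are integral, so the polytope $\{z\ge 0 : \sum_{e\ni v} z_e\le a_v \text{ for all } v\}$ has integral vertices. The linear program defining $\nu_B^*(\mathbf a)$ therefore attains its optimum $t$ at an integral point $m=(m_e)\in\mathbb N^{E(B)}$. (Equivalently, one can invoke K\H{o}nig--Egerv\'ary for $b$-matchings, or split each vertex $v$ into $a_v$ copies and apply K\H{o}nig's theorem to the parallelization $B_{\mathbf a}$.)

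Then $\prod_{e} (x^{e})^{m_e}$ is a product of exactly $t$ edge monomials of $G$. The capacity constraints say that it divides $x^{\mathbf a}=u$. Hence $u\in I^t$.

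The only step needing care is Step 3, the passage from a fractional to an integral $b$-matching. I would cite total unimodularity of bipartite incidence matrices as the cleanest route. Steps 1 and 2 are the genuinely new observation: the equality $\deg_C(u)=t$ kills every edge inside $C$, which is what makes the bipartite reduction available.
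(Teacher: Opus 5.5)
Your proof is correct and follows the paper's argument. The equality $\deg_C(u)=t$ forces all of the weight onto edges with exactly one endpoint in $C$, which reduces the problem to the bipartite subgraph between $C$ and its complement, where integrality holds. The paper does the first step with a convex combination $\mathbf a\ge\sum c_i\mathbf b_i$ of exponents of generators of $I^t$, each forced to satisfy $\deg_C=t$, and then cites normality of bipartite edge ideals. You instead use the fractional matching criterion of Lemma~\ref{lem_criterion_int} and prove the needed integrality directly by total unimodularity, which is the standard reason such ideals are normal.
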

\begin{proof}
Since $u=x^{\mathbf a}\in\overline{I^t}$, there exist positive rational numbers $c_1,\ldots,c_k$ and minimal generators $x^{\mathbf b_1},\ldots,x^{\mathbf b_k}$ of $I^t$ such that $\sum_{i=1}^k c_i=1$ and $\mathbf a\ge\sum_{i=1}^k c_i\mathbf b_i$. Because $\deg_C(u)=t$ and $\deg_C(x^{\mathbf b_i})\ge t$ for every $i$, it follows that $\deg_C(x^{\mathbf b_i})=t$ for all $i$.

Thus each $x^{\mathbf b_i}$ is a product of $t$ edges having exactly one endpoint in $C$. Let $J$ be the edge ideal of the bipartite graph consisting of the edges of $G$ between $C$ and $V(G)\setminus C$. Then $x^{\mathbf b_i}\in J^t$ for all $i$, and hence $u\in\overline{J^t}$. Since $J$ is the edge ideal of a bipartite graph, it is normal (see \cite{SVV}), i.e., $\overline{J^t}=J^t$. Therefore $u\in J^t\subseteq I^t$.
\end{proof}

\begin{proof}[Proof of Theorem \ref{thm_local_v}]
If $I=0$, the assertion is immediate. We may therefore assume that $I\ne0$. Let $C$ be the minimal vertex cover corresponding to $P$. We first observe that if $f$ is a monomial such that $(J:f)=P$, where $J$ is either $I^t$ or $\overline{I^t}$, then
\[
\deg_C(f)=t-1.
\]
Indeed, $J\subseteq P^t$, as $P^t$ is integrally closed. Since $x_i f\in J$ for every $x_i\in P$, we have $\deg_C(f)\ge t-1$. Suppose that $\deg_C(f)\ge t$. Choose $t$ variables, counted with multiplicity, from $C$ whose product divides $f$. Because $C$ is a minimal vertex cover, each of these variables has a neighbor outside $C$. Let $g$ be the product of these neighbors, counted with multiplicity. Then $g\notin P$ and $gf\in I^t\subseteq J$, contradicting $(J:f)=P$. This proves the observation.

Now suppose that $(I^t:f)=P$. Since $I^t\subseteq\overline{I^t}\subseteq P^t$ and $\deg_C(f)=t-1$, we have
\[
P=(I^t:f)\subseteq(\overline{I^t}:f)\subseteq(P^t:f)=P.
\]
Thus $(\overline{I^t}:f)=P$.

Conversely, suppose that $(\overline{I^t}:f)=P$. For each variable $x_i\in P$, we have $x_i f\in\overline{I^t}$ and $\deg_C(x_i f)=t$. Lemma \ref{lem_int_t} gives $x_i f\in I^t$. Consequently,
\[
P\subseteq(I^t:f)\subseteq(\overline{I^t}:f)=P,
\]
so $(I^t:f)=P$. The equality of the local $v$-numbers follows.
\end{proof}

We now show that the difference $v(\overline{I(G)^t})-v(I(G)^t)$ can be arbitrarily large for any fixed integer $t\ge3$, while the difference $v(I(G)^t)-v(\overline{I(G)^t})$ can grow linearly in $t$.

\begin{theorem}
For any fixed integer $t\ge3$ and any positive integer $r$, there exists a graph $G$ such that
\[
v(\overline{I(G)^t})-v(I(G)^t)\ge r-1.
\]
\end{theorem}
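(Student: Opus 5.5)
The plan is to build $G$ so that $v(I(G)^t)$ is attained at an embedded prime $P$ that is associated to $I(G)^t$ but not to $\overline{I(G)^t}$, and that has a cheap witness. At the same time, every prime in $\operatorname{Ass}(S/\overline{I(G)^t})$ should need witnesses of degree at least about $r$ more.

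The construction and the upper bound on $v(I(G)^t)$ go as follows.
\begin{enumerate}
\item Take the disjoint union of a ``triangle gadget'' $T=(t-1)K_3$ with a large ``expensive'' part $H_r$, plus (if needed) a few edges linking them.
\item For the gadget, the witness $F=\prod_{v\in V(T)}x_v$ from the proof of Lemma \ref{lem:triangles_indices} gives $(I(T)^t:F)=\mathfrak m_T$. Multiplying $F$ by a small monomial $g$ supported on $H_r$ should make $(I(G)^t:Fg)$ a prime of the form $\mathfrak m_T+P'$, with $P'$ a vertex-cover prime of $H_r$.
\item The degree of this witness is $3(t-1)+\deg g$. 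If $\deg g$ is independent of $r$, this bounds $v(I(G)^t)$ by a constant depending only on $t$.
\item For $t=3$ the gadget must be chosen so that the prime is \emph{not} associated to $\overline{I^3}$; this is consistent with Theorem \ref{thm:third_power}, which only forbids the reverse inclusion. For $t\ge 4$, Lemma \ref{lem:triangles_indices} already shows $\mathfrak m\notin\operatorname{Ass}(S/\overline{I(T)^{t}})$.
\end{enumerate}

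The lower bound on $v(\overline{I(G)^t})$ splits into two cases.
\begin{enumerate}
\item \emph{Minimal primes $P$.} By Theorem \ref{thm_local_v}, $v_P(\overline{I^t})=v_P(I^t)$. Since the witness satisfies $\deg_C(f)=t-1$ and must avoid all edges leaving $C$ from a large portion of the vertices, $H_r$ should be chosen so that every minimal cover forces $f$ to contain many variables outside $C$. A natural choice is $H_r$ a matching $rK_2$, or a graph whose edge ideal has $v$-number at least $r-1$ (for a matching, $v(I)=r-1$). I would then show that any $f$ with $(\overline{I^t}:f)=P$ restricts to a witness for $I(H_r)$ at $P\cap S_{H_r}$, so $\deg f\ge r-1$.
\item \emph{Embedded primes $P$ of $\overline{I^t}$.} Localize at $P$ by inverting the variables outside $P$. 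This reduces to the case that $P$ is the maximal ideal of a smaller induced graph, which is then governed by Lemma \ref{lem_capacity}. The relevant subgraph is $G[\operatorname{supp}P]$ with the inverted neighbours deleted, and the witness must satisfy the weighted Hall inequalities there. If the link between $T$ and $H_r$ is arranged so that $P$ must contain all of a neighbourhood of $H_r$, the Hall inequalities force weight at least $r-1$ on $H_r$, as in the weight estimate in the proof of Lemma \ref{lem:int_GL}.
\end{enumerate}

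The main obstacle is this second case: controlling all embedded associated primes of $\overline{I(G)^t}$ at once, since no general description of $\operatorname{Ass}(S/\overline{I^t})$ is available. There are two things to try first. One is to make $H_r$ bipartite with only sparse contact to $T$, so that normality (as in Lemma \ref{lem_int_t}) and the localization reduce everything to Lemma \ref{lem_capacity} on $T$ together with minimal-prime arguments on $H_r$. The other is to check that the $r$-dependence of the degree bound survives the localization. If this fails for a plain matching, I would replace $H_r$ by a graph where each component individually forces an extra variable in any witness, such as $r$ pendant paths attached to $T$, and redo the Hall-type count.
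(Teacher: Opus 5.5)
There is a genuine gap, and it sits at item 3 of your construction. If $H_r$ is disjoint from $T$ or only loosely linked to it, $\deg g$ cannot be independent of $r$. The reason is that primality of the colon forces every witness to pay on $H_r$ for the ordinary power just as much as for the integral closure.

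Concretely, take $G=(t-1)K_3\sqcup rK_2$ with matching edges $u_iw_i$ and base triangles $a_jb_jc_j$.
\begin{itemize}
\item \emph{The ordinary $v$-number grows with $r$.} Suppose a monomial $h$ with $(I(G)^t:h)$ prime is divisible by neither $x_{u_i}$ nor $x_{w_i}$. Then neither $x_{u_i}h$ nor $x_{w_i}h$ lies in $I(G)^t$, since the only neighbour of the new vertex has exponent zero. Yet $x_{u_i}x_{w_i}$ lies in the colon, contradicting primality. So every prime witness has degree at least $r$, and $v(I(G)^t)\ge r$. (Incidentally, $v(I(rK_2))=r$, not $r-1$.)
\item \emph{The integral-closure $v$-number grows no faster.} Take the minimal prime $P=(x_{a_j},x_{b_j},x_{u_i})$ and the monomial $x_{c_1}\cdots x_{c_{t-1}}\,x_{w_1}\cdots x_{w_r}(x_{u_1}x_{w_1})^{t-1}$. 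This monomial has colon $P$ in $I(G)^t$, hence also in $\overline{I(G)^t}$ by Theorem~\ref{thm_local_v}. So $v(\overline{I(G)^t})\le 3(t-1)+r$.
\end{itemize}
The gap is therefore at most $3(t-1)$, independently of $r$. More generally, a bipartite $H_r$ satisfies $\overline{I(H_r)^b}=I(H_r)^b$ and contributes identically to both sides. The linking edges you call optional are in fact the entire mechanism.

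Your localization step also does not reduce to Lemma~\ref{lem_capacity} as stated. Inverting the variables outside $P$ turns the neighbours of the inverted vertices into linear generators. The localized ideal is therefore not the edge ideal of an induced subgraph.

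The missing idea is the one that removes your main obstacle entirely: make the cheap ordinary witness itself lie in $\overline{I^t}$. The paper's construction runs as follows.
\begin{itemize}
\item Take $t-1$ disjoint copies of a base triangle $abc$, with $r$ triangles attached at each base vertex, and let $f$ be the product of the $3(t-1)$ base variables.
\item Every attached vertex $u$ is adjacent to a base vertex $a$. So $x_uf$ is divisible by $x_ux_a\cdot x_bx_c$ times one edge of each other base triangle. Hence $(I^t:f)=\mathfrak m$ with no $r$-dependent factor, and $v(I^t)\le 3(t-1)$.
\item Half-weights on the base triangles give $\nu^*\ge 3(t-1)/2\ge t$; this is where $t\ge 3$ is used. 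So $f\in\overline{I^t}$, and no monomial $g$ with $(\overline{I^t}:g)$ prime is divisible by $f$.
\item Such a $g$ misses some base vertex $a$. For each attached triangle $au_iv_i$, the primality argument above then forces $x_{u_i}$ or $x_{v_i}$ to divide $g$.
\item If $k\ge1$ base vertices are missed, this gives $\deg g\ge 3(t-1)+k(r-1)$. No description of $\operatorname{Ass}(S/\overline{I^t})$ is needed.
\end{itemize}
Your fallback of pendant gadgets on $T$ points in this direction, but it only works together with $f\in\overline{I^t}$. Also, the gadget at $a$ must contain an edge not incident to $a$: a single pendant leaf at $a$ is never forced into the support.
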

\begin{proof}
Let $H_r$ be the graph consisting of a base triangle $abc$ together with $r$ triangles attached to each of its vertices. Define $G_{t,r}$ to be the disjoint union of $t-1$ copies of $H_r$, and set $I=I(G_{t,r})$.

Let $f$ be the product of all variables corresponding to the $t-1$ base triangles in $G_{t,r}$. Then $\deg(f)=3(t-1)$ and $f\notin I^t$, since each base triangle contributes at most one edge factor to $f$. However, $x_vf\in I^t$ for every variable $x_v$. Indeed, in the component containing $v$, the monomial $x_vf$ admits two edge factors, while the remaining $t-2$ base triangles each supply one edge factor. Thus $(I^t:f)=\mathfrak m$, giving
\[
v(I^t)\le3(t-1).
\]

We also have $f\in\overline{I^t}$. Indeed, assigning weight $1/2$ to each edge of every base triangle gives a fractional matching with capacities given by the exponent vector of $f$ and total weight $3(t-1)/2\ge t$. The assertion follows from Lemma \ref{lem_criterion_int}.

Let $g=x^{\mathbf b}$ be a monomial such that $(\overline{I^t}:g)=P$ is an associated prime of $\overline{I^t}$. Since $g\notin\overline{I^t}$ while $f\in\overline{I^t}$, we have $f\nmid g$. Thus at least one variable of a base triangle does not divide $g$.

Let $abc$ be a base triangle and suppose that $a\nmid g$. Consider any of the triangles $au_iv_i$ attached to $a$. If neither $u_i$ nor $v_i$ divides $g$, then the capacities of $a,u_i,v_i$ are all zero. Since $u_i$ and $v_i$ have no neighbors outside $\{a,u_i,v_i\}$, adding either $x_{u_i}$ or $x_{v_i}$ cannot increase the fractional matching number. By Lemma \ref{lem_criterion_int}, neither $x_{u_i}g$ nor $x_{v_i}g$ belongs to $\overline{I^t}$. Thus $x_{u_i},x_{v_i}\notin P$, contradicting $x_{u_i}x_{v_i}\in I\subseteq P$. Therefore at least one of $u_i,v_i$ divides $g$.

Let $k\ge1$ be the number of variables of the base triangles that do not divide $g$. Each such missing variable forces at least one variable from each of its $r$ attached triangles to divide $g$. These attached triangles have distinct vertices outside the base triangles. Consequently,
\[
\deg(g)\ge 3(t-1)-k+kr
=3(t-1)+k(r-1)\ge3(t-1)+r-1.
\]
It follows that $v(\overline{I^t})\ge3(t-1)+r-1$, and hence
\[
v(\overline{I^t})-v(I^t)\ge r-1
\]
as desired.
\end{proof}

\begin{theorem}\label{thm:ordinary-larger-arbitrarily}
For any fixed integer $t\ge4$ and any positive integer $r$, there exists a graph $G$ such that
\[
v(I(G)^t)-v(\overline{I(G)^t})\ge r.
\]
\end{theorem}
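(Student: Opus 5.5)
The plan is to modify the graph from the proof of Theorem~\ref{thm:noncontainment}, so that the integral-closure side keeps a witness of fixed degree $2t-1$ while every witness for $I^t$ is forced to have large degree. Fix $t\ge4$ and $r\ge1$, and let $B$ be as in that proof: two odd cycles $C_3$ and $C_{2t-5}$, each joined by one edge to a vertex $z$, so that $|V(B)|=2t-1$. Instead of one leaf at each vertex of $B$, I attach $N$ pendant leaves to every vertex of $B$, with $N$ large in terms of $r$ and $t$. Call the resulting graph $G=G_{t,N}$; equivalently, $G$ is a parallelization of the graph of Theorem~\ref{thm:noncontainment} in the leaf coordinates.

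\textbf{Step 1: the upper bound on $v(\overline{I^t})$.} Take $f=\prod_{v\in V(B)}x_v$, so $\deg(f)=2t-1$. I will check the inequalities of Lemma~\ref{lem_capacity} for this $f$ on $G$.
\begin{itemize}[leftmargin=*]
\item If an independent set $U$ contains vertices of $B$, the inequality reduces to the strict Hall-type condition on $B$. That condition was effectively verified in Theorem~\ref{thm:noncontainment}, and it is unaffected by extra leaves because leaves carry exponent $0$.
\item If $U$ consists only of leaves, then $\mathbf a(U)=0$ and $N_G(U)$ contains a vertex of $B$, so $\mathbf a(N_G(U))\ge 1$.
\end{itemize}
Hence $(\overline{I^t}:f)=\mathfrak m$, and $v(\overline{I^t})\le 2t-1$ for every $N$.

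\textbf{Step 2: the lower bound $v(I^t)\ge 2t-1+r$.} Let $g$ be a monomial with $(I^t:g)=P$ prime, and split into two cases.
\begin{itemize}[leftmargin=*]
\item If $P=\mathfrak m$: the argument of Theorem~\ref{thm:noncontainment} via \cite[Theorem 3.6]{LT} and Lemma~\ref{lem_bridge_edge} shows $\mathfrak m\notin\operatorname{Ass}(S/I^t)$. Any dominating $U$ with $G[U]$ strongly non-bipartite contains $V(B)$, and its $\mu^*$ is at least $\mu^*(B)=t$. So this case does not occur.
\item If $P\ne\mathfrak m$: then $P$ omits some variable. For a vertex $v\in V(B)$ with $x_v\notin P$, all $N$ leaves at $v$ must lie in $P$, since each leaf edge must be covered. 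For each such leaf $\ell$ we need $x_\ell g\in I^t$ while $g\notin I^t$, which forces $\ell$ to be used in an edge factor with $v$. I will use this to show that either $\deg_v(g)$ is large or that $x_\ell\notin P$. Then I count: each vertex of $B$ whose leaves are in $P$, but whose own variable is not, forces a bounded contribution. The goal is to show that the only cheap primes are those containing all of $V(B)$. Dropping leaves from such a $P$ means the corresponding leaf variables must divide $g$ or be accounted for, and I would aim to show this costs at least about $N/(\text{const})$ extra degree.
\end{itemize}

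\textbf{Main obstacle.} The hard part is the case analysis in Step~2 showing that every associated prime $P\ne\mathfrak m$ of $I^t$ needs a witness of degree at least $2t-1+r$. The tempting shortcut, primes whose witness lives essentially inside $B$ with $\deg=2t-2$ or so, is exactly what must be ruled out. I would first try to reduce, via localization at the variables outside $P$ (as in the proof of Theorem~\ref{thm_local_v}), to a statement on the induced subgraph on $P$ together with the neighbors of $g$'s support. I would then use the structural description of \cite{LT} for such localized primes: a dominating set in the localized graph with strongly non-bipartite induced part and $\mu^*<t$, where the non-bipartite part must use one of the two odd cycles. I would then show that domination of the many leaves forces either including $N$ leaf variables in $g$ or including $V(B)$ in full, which pushes $\mu^*$ to $t$. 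If this fails for small powers, I would fall back on enlarging the cycles by parallelizing their vertices, so that every odd-cycle-based witness is forced to be large while the fractional witness of Step~1 keeps degree $2t-1$.
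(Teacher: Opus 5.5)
Your Step 1 is fine, and so is the case $P=\mathfrak m$ of Step 2. The gap is the construction itself. With pendant leaves, $v(I(G_{t,N})^t)$ stays bounded as $N\to\infty$, so the bound you want in Step 2 is false for large $r$, not merely hard to prove.

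Fix $v\in V(B)$ and one leaf $\ell_w$ at each $w\in V(B)$, and set
\[
g=x_v^{t-1}x_{\ell_v}^{t}\prod_{w\in V(B)\setminus\{v\}}x_{\ell_w},\qquad \deg(g)=4t-3 .
\]
\begin{itemize}
\item Every edge of $G$ meets $V(B)$, and the only vertex of $V(B)$ dividing $g$ is $v$, with exponent $t-1$. So no monomial in the leaf variables times $g$ lies in $I^t$.
\item On the other hand, $x_vg$ is divisible by $(x_vx_{\ell_v})^t$.
\item For $w\neq v$, $x_wg$ is divisible by $x_wx_{\ell_w}(x_vx_{\ell_v})^{t-1}$.
\end{itemize}
Hence $(I^t:g)=(x_w\mid w\in V(B))$ is prime, and $v(I^t)\le 4t-3$ for every $N$. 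Consequently $v(I^t)-v(\overline{I^t})\le 4t-3$ on your whole family, and your target $v(I^t)\ge 2t-1+r$ fails once $r\ge 2t-1$.

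Your heuristic in the second bullet points the wrong way. A leaf at $v$ either stays outside $P$ at no cost, when $x_v\in P$. Or it lies in $P$, but then all $N$ leaves can share the same copy of $x_v$ in the witness. Either way $N$ never enters the degree count. Parallelizing the cycles does not change this mechanism.

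The missing idea is a gadget that forces one new variable into any witness per gadget. The paper attaches $k$ paths $x-x_{i1}-x_{i2}$ of length two at each core vertex $x$.
\begin{itemize}
\item Suppose $x\nmid F$ and $x_{i1},x_{i2}\nmid F$. Then neither $x_{i1}$ nor $x_{i2}$ can lie in $(I^t:F)$, yet $x_{i1}x_{i2}$ does, which contradicts primality. So each core vertex missing from $F$ costs $k$ extra degree.
\item If every core vertex divides $F$, the $k$ paths at $z$ still cost $k$. For each $i$, if $z_{i1},z_{i2}\nmid F$, then $z_{i1}F\notin I^t$. The reason is that $z_{i1}$ can only pair with $z$, and $B-z$ (a triangle plus an odd cycle) has matching number $t-2$.
\end{itemize}
This gives $v(I^t)\ge 2t-2+k$. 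Meanwhile $\prod_{x\in V(B)}x$ remains a degree-$(2t-1)$ witness for $\overline{I^t}$. The weight $1$ on $zz_{i1}$ together with weight $\tfrac12$ on all cycle edges puts $z_{i1}\prod_{x\in V(B)}x$ in $\overline{I^t}$. So the associated prime is now the non-maximal prime $(x,x_{i1}\mid x\in V(B),\ i\in[k])$ rather than $\mathfrak m$.
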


\begin{proof} For ease of reading, we divide the proof into several steps.

\smallskip
\noindent\textbf{Step 1: Graph Construction ($G_k$).}
\begin{itemize}
    \item \textbf{The Core Graph ($G$):} The graph $G$ consists of a triangle with vertices $\{a, b, c\}$, a $(2t-5)$-cycle with vertices $\{d_1, d_2, \dots, d_{2t-5}\}$, and an additional vertex $z$ adjacent to both $a$ and $d_1$. Let $V_{\text{core}} = \{a, b, c, d_1, \dots, d_{2t-5}, z\}$ denote the set of these core vertices.
    
    \item \textbf{The Attached Paths ($G_k$):} For each vertex $x \in V_{\text{core}}$, we attach $k$ paths of length $2$. That is, for each index $i \in \{1, \dots, k\}$, we introduce two new vertices, $x_{i1}$ and $x_{i2}$, along with the edges $\{x,x_{i1}\}$ and $\{x_{i1},x_{i2}\}$.
\end{itemize}

\smallskip
\noindent\textbf{Step 2: The $v$-number of the Integral Closure.}
By \cite[Theorem 2.10]{MV}, we have $\overline{I(G_k)^t} = I(G_k)^t + (abcd_1\cdots d_{2t-5})I(G)$. It then follows that
\[
\overline{I(G_k)^t} \colon (abcd_1\cdots d_{2t-5}z) = (x, x_{i1} \mid x \in V_{\text{core}} , \, i \in [k]).
\]
Hence, \( v(\overline{I(G_k)^t}) \le 2t - 1.
\) Moreover, as $v(\overline{I(G_k)^t}) \ge 2t - 1$, we conclude that
\[
v(\overline{I(G_k)^t}) = 2t - 1.
\]
\smallskip
\noindent\textbf{Step 3: Lower Bound Setup via the Set $\Omega$.} 
To establish a lower bound for $v(I(G_k)^t)$, let $F$ be a monomial such that $v(I(G_k)^t) = \deg F$ and $(I(G_k)^t \colon F)$ is a prime ideal. We define the subset of core vertices
\[
\Omega = \{x \in V_{\text{core}} \mid x \nmid F\}.
\]
For any vertex $x \in \Omega$ and for each $i \in [k]$, at least one of $x_{i1}$ or $x_{i2}$ must divide $F$. Indeed, if neither $x_{i1}$ nor $x_{i2}$ divides $F$, then $x_{i1}, x_{i2} \notin (I(G_k)^t \colon F)$, while $x_{i1}x_{i2} \in (I(G_k)^t \colon F)$, leading to a contradiction since $(I(G_k)^t \colon F)$ is a prime ideal.

Since each vertex in $\Omega$ forces at least $k$ distinct variables from the attached paths to divide $F$, while the remaining $2t - 1 - |\Omega|$ core vertices also divide $F$, we obtain the degree bound
\[
\deg F \ge k|\Omega| + (2t - 1 - |\Omega|) = (k - 1)|\Omega| + 2t - 1.
\]
Hence, if $\Omega \neq \emptyset$, then $|\Omega| \ge 1$, which yields
\[
v(I(G_k)^t) \ge (k - 1)|\Omega| + 2t - 1 \ge k - 1 + 2t - 1.
\]

\smallskip
\noindent\textbf{Step 4: The Case Where $\Omega = \emptyset$.} 
Suppose $\Omega = \emptyset$. Then every core vertex $x \in V_{\text{core}}$ divides $F$, so the core monomial 
\[
f = a b c d_1 \cdots d_{2t-5} z
\]
divides $F$. Observe that $f x \in I(G_k)^t$ for all $x \in V_{\text{core}}$, as well as for $x = x_{i1}$ with $x \in V_{\text{core}} \setminus \{z\}$ and $i \in [k]$. Thus, the remaining variables in $F/f$ can only come from $\{x_{i2} \mid x \in V_{\text{core}}, \, i \in [k]\}$ and $\{z_{i1} \mid i \in [k]\}$.

Moreover, for each $i \in [k]$, at least one of $z_{i1}$ or $z_{i2}$ must divide $F$. Indeed, if neither divides $F$, then $z_{i1}, z_{i2} \notin (I(G_k)^t \colon F)$ while $z_{i1}z_{i2} \in (I(G_k)^t \colon F)$, a contradiction. Hence,
\[
v(I(G_k)^t) = \deg F \ge 2t - 1 + k.
\]
Combining both cases, we deduce that $v(I(G_k)^t) \ge 2t - 1 + k - 1$. Subtracting $v(\overline{I(G_k)^t}) = 2t - 1$ gives
\[
v(I(G_k)^t) - v(\overline{I(G_k)^t}) \ge k - 1.
\]
By setting $k = r + 1$, we achieve the desired bound of $r$.
\end{proof}


\end{document}